\newif\ifdviwin
\newif\ifdviwin
\def\vart{\vartheta}
\let\tilde=\widetilde
\let\landa=\lambda
\let\alfa=\alpha
\def\ep{\varepsilon}
\def\landa{\lambda}
\def\flecha{\rightarrow}
\def\esiz{\langle}
\def\esde{\rangle}
\def\cte.{\mathop{\rm cte.}\nolimits}
\def\det{\mathop{\rm det}\nolimits}
\def\Re{\mathop{\rm Re }\nolimits}
\def\N{\mathbb{N}}
\def\L{\mathbb{L}}
\def\R{\mathbb{R}}
\def\C{\mathbb{C}}
\def\D{\mathbb{D}}
\def\H{\mathbb{H}}
\def\S{\mathbb{S}}
\def\X{\mathfrak{X}}
\def\Nil{{\rm{Nil}_3}}
\def\Ek{\mathbb{E}^3 (\kappa,\tau)}
\def\Mk{\mathcal{M}^2(\kappa)}
 \newtheorem{defi}{Definition}
 \newtheorem{teo}[defi]{Theorem}
 \newtheorem{pro}[defi]{Proposition}
 \newtheorem{cor}[defi]{Corollary}
 \newtheorem{lem}[defi]{Lemma}
 \newtheorem{remark}[defi]{Remark}
 \newenvironment{proof}{\rm \trivlist \item[\hskip \labelsep{\it
      Proof}:]}{\par\nopagebreak \hfill $\Box$ \endtrivlist}
\newcommand\esc[2]{\langle{#1},{#2}\rangle}
\numberwithin{equation}{section}
\begin{document}
\mbox{}\vspace{0.4cm}\mbox{}

\begin{center}
\rule{14cm}{1.5pt}\vspace{0.5cm}

{\Large \bf Holomorphic quadratic differentials and } \\ [0.3cm]{\Large \bf the Bernstein problem in Heisenberg space}\\
\vspace{0.5cm} {\large Isabel Fernández$\mbox{}^a$ and Pablo
Mira$\mbox{}^b$}\\ \vspace{0.3cm} \rule{14cm}{1.5pt}
\end{center}
  \vspace{1cm}
$\mbox{}^a$ Departamento de Matemáticas, Universidad de
Extremadura, E-06071 Badajoz, Spain. \\ e-mail: isafer@ugr.es
\vspace{0.2cm}

\noindent $\mbox{}^b$ Departamento de Matemática Aplicada y
Estadística,
Universidad Politécnica de Cartagena, E-30203 Cartagena, Murcia, Spain. \\
e-mail: pablo.mira@upct.es \vspace{0.2cm}

\vspace{0.2cm}

\noindent AMS Subject Classification: 53A10 \\

\noindent Keywords: minimal graphs, Bernstein problem,
holomorphic quadratic differential, Heisenberg group.

\vspace{0.3cm}

 \begin{abstract}
We classify the entire minimal vertical graphs in the Heisenberg
group ${\rm Nil_3}$ endowed with a Riemannian left-invariant
metric. This classification, which provides a solution to the
Bernstein problem in ${\rm Nil}_3$, is given in terms of the
Abresch-Rosenberg holomorphic differential for minimal surfaces
in ${\rm Nil}_3$.
 \end{abstract}

\section{Introduction}\label{sec:intro}

A recent major achievement in the theory of constant mean
curvature (CMC) surfaces in Riemannian $3$-manifolds has been the
discovery by Abresch and Rosenberg \cite{AbRo1,AbRo2} of a
holomorphic quadratic differential for every CMC surface in any
homogeneous $3$-manifold with a $4$-dimensional isometry group.
The most remarkable consequence of this construction is the
classification given in \cite{AbRo1,AbRo2} of the immersed CMC
spheres in these homogeneous spaces as embedded rotational ones,
thus solving an open problem proposed in \cite{HsHs}.

Our purpose in this paper is to solve by means of the
Abresch-Rosenberg differential another popular open problem in
the theory of CMC surfaces in homogeneous spaces. Namely, the
Bernstein problem in the Riemannian Heisenberg $3$-space $\Nil$,
i.e. the classification of the entire minimal vertical graphs in
$\Nil$.

Let us fix some terminology. By using exponential coordinates and
up to re-scaling, we shall regard the \emph{Heisenberg space}
$\Nil$ as $\R^3$ endowed with the Riemannian metric $$ d\sigma^2
:= dx^2 +dy^2 + \left(dz + \frac{1}{2}( y dx -x dy)\right)^2.$$
This Riemannian metric is left-invariant for the Lie group
structure of Heisenberg space. In this way $\Nil$ becomes one of
the five simply-connected canonical homogeneous $3$-manifolds
with a $4$-dimensional isometry group.

A (vertical) graph $z= f(x,y)$ in $\Nil$ is said to be minimal if
its mean curvature vanishes identically, and \emph{entire} if it
is defined for every $(x,y)\in \R^2$. The \emph{Bernstein
problem} in $\Nil$ asks for the classification of all entire
vertical minimal graphs of $\Nil$. Equivalently, we wish to find
all the $C^2$ solutions globally defined on $\R^2$ of the
quasilinear elliptic PDE
 \begin{equation}\label{min}
(1+ \beta^2) f_{xx}  - 2 \alfa \beta \, f_{xy} +
(1+\alfa^2)f_{yy} =0,
 \end{equation}
where $\alfa:= f_x + y/2$ and $\beta := f_y - x/2$ (see
\cite{ADR,Iea} for instance).

The simplest entire solutions to \eqref{min} are the linear ones,
which give rise to entire minimal graphs in $\Nil$ that are
called \emph{horizontal umbrellas} \cite{AbRo2}. In this model
they are just non-vertical planes, but they are not totally
geodesic. Other simple solution is $f(x,y)= xy/2$, and more
generally, the $1$-parameter family \cite{FMP}
 \begin{equation}\label{saddle}
 f(x,y)= \frac{xy}{2} - c \left( y\sqrt{1+y^2} + \log \left( y + \sqrt{1+y^2}\right)\right), \hspace{0.8cm} c\in \R.
 \end{equation}
The resulting entire minimal graphs are called \emph{saddle-type
minimal examples} \cite{AbRo2}. There are other explicit entire
solutions to \eqref{min} foliated by Euclidean straight lines (see
\cite{Dan2}), what suggests that the class of entire minimal
graphs in $\Nil$ is quite large.

The Bernstein problem in $\Nil$ is a usual discussion topic among
people working on CMC surfaces in homogeneous spaces, and appears
explicitly as an open problem in several works like
\cite{FMP,AbRo2,Dan2,Iea,MMP}. Let us also point out that the
Bernstein problem in the sub-Riemannian Heisenberg space $\H^1$
has been intensely studied in the last few years (see
\cite{BSV,CHMY,GaPa,RiRo} and references therein).

In this work we show that the moduli space of entire minimal
graphs in $\Nil$ can be parametrized in terms of their
Abresch-Rosenberg differentials. Specifically, we prove:

\begin{teo}[Main Theorem]\label{th:main}
Let $Q dz^2$ be a holomorphic quadratic differential in $\Sigma
\equiv \C$ or $\D$ which is not identically zero if $\Sigma\equiv
\C$. Then there exists a $2$-parameter family of (generically
non-congruent) entire minimal graphs in $\Nil$ whose
Abresch-Rosenberg differential is $Qdz^2$.

Conversely, any entire minimal graph in $\Nil$ is one of the
above examples.
\end{teo}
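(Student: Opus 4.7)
The strategy is to recast the Bernstein problem as a classification problem for harmonic maps into the hyperbolic plane $\H^2$, and then invoke the classical theory of such maps organized by their Hopf differential. To a conformally immersed minimal surface $\psi:\Sigma\to\Nil$ I would associate a ``harmonic Gauss-type map'' $g:\Sigma\to\H^2$, constructed from the angle function $\nu=\esc{N}{\xi}$ with the vertical Killing field $\xi$ together with the horizontal part of the unit normal $N$. A direct computation in an orthonormal frame adapted to $\xi$, using the structure equations of $\Nil$, should show that (i) the minimality of $\psi$ is equivalent to the harmonicity of $g$ with respect to the hyperbolic metric of $\H^2$, and (ii) the Hopf differential of $g$ agrees, up to a universal nonzero factor, with the Abresch--Rosenberg differential $Qdz^2$ of $\psi$.

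Conversely, I would show that given any harmonic map $g:\Sigma\to\H^2$ one can integrate an overdetermined first--order system whose integrability condition is exactly the harmonic map equation, to produce a conformal minimal immersion $\psi_g:\Sigma\to\Nil$ whose associated harmonic map is $g$. The integration constants, once quotiented by the isometries of $\Nil$ that preserve both the complex structure and the Abresch--Rosenberg differential, will yield the two real degrees of freedom appearing in the statement and explain the ``generically non-congruent'' clause.

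The next step is to characterize which $\psi_g$ are entire vertical graphs. The criterion I would aim to prove is that $\psi_g$ is an entire vertical graph if and only if $\Sigma\equiv \C$ or $\D$, the angle function $\nu$ does not vanish on $\Sigma$ (so that the image of $g$ lies in the interior of the disk model of $\H^2$), and the conformal metric induced by $\psi_g$ is complete. Granting this reduction, the existence half of the theorem follows by invoking the classification of harmonic maps into $\H^2$ with prescribed Hopf differential due to Wan, Au--Tam--Wan and Han--Tam--Treibergs: every holomorphic $Qdz^2$ on $\D$, and every nonzero such differential on $\C$, is the Hopf differential of a harmonic map into $\H^2$ with the appropriate completeness property. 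The converse half follows by uniformizing an arbitrary entire minimal graph $\Sigma$ as $\C$ or $\D$ and applying the forward construction to read off its Abresch--Rosenberg differential.

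The main obstacle will be the graph characterization of the previous paragraph. One implication (entire graph $\Rightarrow$ $\nu>0$, completeness, and $\Sigma\equiv\C$ or $\D$) is direct from the maximum principle and uniformization, but the converse is subtle: from the harmonic-map data one must conclude that the horizontal projection of $\psi_g$ is a global diffeomorphism onto $\R^2$. This will require combining the positivity of $\nu$ (which makes $\psi_g$ locally a graph), the completeness of the induced conformal metric (which prevents the projection from missing points of $\R^2$), and a monodromy or lifting argument to promote local graphicality to global graphicality. The control on the image and boundary behavior of $g$ in $\H^2$ supplied by the Wan/Au--Tam--Wan/Han--Tam--Treibergs theory is what should make this argument go through, and is the reason why precisely the holomorphic differentials listed in the statement occur.
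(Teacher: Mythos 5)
There is a genuine gap, and it sits exactly at the point you flag as ``the main obstacle.'' Your reduction of the problem to a completeness criterion uses the wrong metric, and you have no mechanism to verify that criterion for an arbitrary entire graph. The correct classifying condition (the paper's Corollary \ref{corug}) is completeness of $u^2 g$, where $u$ is the angle function and $g$ the induced metric --- not completeness of $g$ itself. The inequality governing the vertical projection is $u^2 g \le g_F \le g$ (Lemma \ref{lem:autov}), so completeness of $u^2 g$ forces the projection to be a covering of $\R^2$ and hence a global diffeomorphism; completeness of $g$ forces nothing, since it sits on the wrong side of $g_F$. Your claim that ``entire graph $\Rightarrow$ completeness'' is ``direct from the maximum principle'' is true only for $g$ (an entire graph pulls back the flat metric of $\R^2$ completely and $g \ge g_F$), but that is useless for the converse direction: what must be shown is that an entire minimal graph has \emph{complete $u^2 g$}, and this does not follow from completeness of $g$. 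Indeed the paper exhibits an entire $H=1/2$ graph in $\H^2\times\R$ with complete $g$ but non-complete $u^2 g$, so the implication you need is genuinely delicate and specific to the geometry at hand.

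The paper closes this gap with an idea absent from your proposal: Lemma \ref{lem:sister} shows that if $X=(F,t)$ is the minimal graph in $\Nil$ and $h$ is the height function of its sister $H=1/2$ surface in $\H^2\times\R$, then $(F,\pm h)$ is itself a spacelike $H=1/2$ surface in $\L^3$ with induced metric exactly $u^2 g$. Since $F$ is a global diffeomorphism onto $\R^2$, this is an \emph{entire spacelike CMC graph} in $\L^3$, and the Cheng--Yau theorem then delivers the completeness of $u^2 g$. Without this bridge to $\L^3$ (or an equivalent substitute), your converse direction does not close: the Wan / Au--Tam--Wan theory classifies harmonic maps with \emph{complete} Weierstrass data, and you have no way to certify that the data arising from an arbitrary entire graph is complete. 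Your choice of the Gauss map internal to $\Nil$ (à la Daniel) rather than the detour through $\H^2\times\R$ is a legitimate alternative framework --- the authors themselves say so in their closing remarks --- but whichever Gauss map you use, the Cheng--Yau completeness step, suitably transplanted, is the indispensable ingredient you are missing.
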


It is worth mentioning that if two holomorphic quadratic
differentials on $\C$ or $\D$ only differ by a global conformal
change of parameter, then the families of minimal graphs that they
induce are the same (the corresponding immersions differ by this
conformal global change of parameter).

We also remark that the conformally immersed minimal surfaces
$X:\Sigma\flecha \Nil$ with $\Sigma\equiv \C$ and
Abresch-Rosenberg differential $Q dz^2 = c\, dz^2$, $c\neq 0$,
are exactly the saddle-type minimal examples \eqref{saddle}, see
Section \ref{sec:prelim}. In that case some of the elements of
the $2$-parameter family are mutually congruent. On the other
hand, the minimal surfaces in $\Nil$ with vanishing
Abresch-Rosenberg differential on $\Sigma=\C$ are the vertical
planes, which are not entire graphs.

The solution to the Bernstein problem exposed in Theorem
\ref{th:main} is sharp, in the following sense. First, it tells
that the space of entire solutions to \eqref{min} is extremely
large. So, as these solutions will not be explicit in general,
the most reasonable answer to the Bernstein problem in $\Nil$
that one can look for is a description of the moduli space of
entire minimal graphs in terms of some already known class of
objects. With this, Theorem \ref{th:main} tells that the space of
entire minimal graphs in $\Nil$ can be described by the space of
holomorphic quadratic differentials on $\C$ or $\D$, which is a
well known class.

The proof of Theorem \ref{th:main} is based on four pillars.
First, the Abresch-Rosenberg holomorphic differential, whose role
is substantial since it is the classifying object in the solution
to the Bernstein problem. Second, the construction by the authors
in \cite{FeMi1} of a harmonic Gauss map into the hyperbolic plane
$\H^2$ for CMC surfaces with $H=1/2$ in $\H^2\times \R$, and the
global implications of this construction. Actually, the present
paper may be regarded as a natural continuation of our previous
work \cite{FeMi1} in many aspects. The third pillar is the
Lawson-type isometric correspondence by Daniel \cite{Dan1}
between minimal surfaces in $\Nil$ and $H=1/2$ surfaces in
$\H^2\times \R$. And fourthly, we will use some deep results by
Wan-Au \cite{Wan,WaAu} on the (at a first sight unrelated)
classification of the complete spacelike CMC surfaces in the
Lorentz-Minkowski $3$-space $\L^3$.

We have organized the paper as follows. Section \ref{sec:prelim}
will be devoted to analyze in detail the behavior of a large
family of complete minimal surfaces constructed in \cite{FeMi1}
that are local graphs in $\Nil$. We shall call them
\emph{canonical examples} in $\Nil$. For that, we will revise all
the basic machinery of the Abresch-Rosenberg differential, the
Lawson-type correspondence in \cite{Dan1} and the harmonic Gauss
map for $H=1/2$ surfaces in $\H^2\times \R$.

In Section \ref{mainsec} we will prove that these canonical
examples are precisely the entire minimal graphs in $\Nil$. This
result and the constructions of Section \ref{sec:prelim} imply
Theorem \ref{th:main}, and thus solve the Bernstein problem in
$\Nil$.

Finally, in Section \ref{sec:H2xR} we shall give a large family
of entire vertical graphs with $H=1/2$ in $\H^2\times \R$, what
can be seen as an advance in the problem of classifying all such
entire graphs.

\subsection*{Acknowledgements}

The authors wish to express their gratitude to the referee of
this paper for his valuable comments and suggestions.

Isabel Fernández was partially supported by MEC-FEDER Grant No.
MTM2004-00160. Pablo Mira was partially supported by MEC-FEDER,
Grant No. MTM2007-65249.

\section{Setup: the canonical examples in ${\rm Nil}_3$}\label{sec:prelim}

In this section we introduce the Heisenberg space $\Nil$ and the
product space $\H^2\times\R$ as Riemannian homogeneous $3$-spaces
with $4$-dimensional group of isometries and we describe some
basic facts about the theory of surfaces in these spaces that will
be useful for our purposes. We shall also analyze the
\emph{canonical examples}, which will be characterized in Section
3 as the only entire minimal graphs in $\Nil$. In order to do
that, we will recall the correspondence between minimal surfaces
in $\Nil$ and CMC $1/2$ surfaces in $\H^2\times\R$, as well as
the notion of {\em hyperbolic Gauss map} for surfaces in
$\H^2\times\R$ and its basic properties. For more details about
these matters we refer to \cite{AbRo2,Dan1,Dan2,FeMi1,FeMi2} and
references therein.

\subsection*{Surfaces in homogeneous $3$-spaces}\label{sub:homog}

We will label as $\Mk$ the unique $2$-dimensional Riemannian
space form of constant curvature $\kappa$. Thus, $\Mk=\R^2, \S^2$
or $\H^2$ when  $\kappa=0,1$ or $-1$, respectively.

Any simply-connected homogeneous $3$-manifold with a
$4$-dimensional group of isometries can be described as a
Riemannian fibration over some $\Mk$. Translations along the
fibers are isometries and therefore they generate a Killing
field, $\xi,$ also called {\em the vertical field}. The
\emph{bundle curvature} of the space is the number $\tau\in\R$
with $\kappa\neq 4\tau^2$ such that $ \overline{\nabla}_X
\xi=\tau X\times\xi $ holds for any vector field $X$ on the
manifold. Here $\overline{\nabla}$ is the Levi-Civita connection
of the manifold and $\times$ denotes the cross product. When
$\tau=0$ this fibration becomes trivial and thus we get the
product spaces $\Mk\times\mathbb{R}$. When $\tau\neq 0$ the
manifolds have the isometry group of the Heisenberg space if
$\kappa=0,$ of the Berger spheres if $\kappa>0$, or the one of
the universal covering of $\mbox{PSL}(2,\mathbb{R})$ when
$\kappa<0$.

In what follows $\mathbb{E}^3(\kappa,\tau)$ will represent a
simply connected homogeneous 3-manifold with isometry group of
dimension 4, where $\kappa$ and $\tau$ are the real numbers
described above.

For an immersion $X:\Sigma\to\Ek$, we will define its
\emph{vertical projection} as $F:=\pi\circ\psi:\Sigma\to\Mk$,
where $\pi :\Ek\flecha \Mk$ is the canonical submersion of the
homogeneous space. A surface in $\Ek$ is a (vertical) entire
graph when its vertical projection is a global diffeomorphism.
From now on, by a \emph{graph} we will always mean a vertical one.

\begin{remark}\label{re:heis}
The case we are mostly interested in is, up to re-scaling, when
$\kappa=0$ and $\tau=1/2$. This corresponds to the Heisenberg
space, that will denoted by $\Nil$. This space can be seen as
${\Nil}=(\R^3,d\sigma^2)$ where $d\sigma^2$ is the Riemannian
metric $$ d\sigma^2 := dx_1^2 +dx_2^2 + \left(dx_3 + \frac{1}{2}(
x_2 dx_1 -x_1 dx_2)\right)^2.$$ In this model the Riemannian
submersion $\pi$ is just the usual projection of $\R^3$ onto the
plane $x_3=0$, $$\pi:{\Nil}\to\R^2, \quad
\pi(x_1,x_2,x_3)=(x_1,x_2).$$
\end{remark}

Let $X:\Sigma\to\Ek$ be an immersed surface with unit normal
vector $\eta$. Given a conformal parameter $z$ for $X$ let us
write its induced metric as $\landa|dz|^2$. We also label $H$ as
the mean curvature of the surface and $p\,dz^2$ as its Hopf
differential, i.e. $p=-\esc{X_z}{\eta_z}$. The {\em angle
function} is defined as the normal component of the vertical
field, $u=\esc{\eta}{\xi}$. Finally, we define the complex-valued
function $A$ as $A=\esc{\xi}{X_z}=\esc{T}{\partial_z},$ where
$T\in\mathfrak{X}(\Sigma)$ is given by $dX(T)=\xi-u\eta.$ We will
call  $\{\landa,u,H,p,A\}\in\R^+\times [-1,1]\times\R\times \C^2$
the {\em fundamental data } of the surface $X:\Sigma\to\Ek$.

The integrability conditions of surfaces in $\Ek$ can be
expressed in terms of some equations involving these fundamental
data (see \cite{Dan1,FeMi2}). Of these, in this paper we will
only need the relation $$ {\bf (C.4)} \hspace{1cm} \frac{4
|A|^2}{\landa}  =  1 - u^2 .$$

With the above notations, if $F:\Sigma\to\Mk$ is the vertical
projection of $X:\Sigma\to\Ek$, then
 \begin{equation}\label{xip}
 F_z=X_z - A\xi.
 \end{equation}
Notice also that when $\tau=0$ (i.e., $\Ek=\Mk\times\R$) the
vertical field $\xi$ is nothing but
$\xi=(0,1)\in\mathfrak{X}(\Mk)\times\R$. Therefore, if we write
$X=(N,h):\Sigma\to\Mk\times\R$ then the vertical projection is
$N$ and $A=h_z$. The function $h$ will be called the {\em height
function}.

\subsection*{The Abresch-Rosenberg differential}

The Abresch-Rosenberg differential for an immersed surface
$X:\Sigma\flecha \Ek$ is defined as the quadratic differential
$$Q dz^2 = \left( 2(H+i\tau) p - (\kappa -4 \tau^2) A^2 \right)
dz^2.$$ Its importance is given by the following result.
\begin{teo}[\cite{AbRo1,AbRo2}]
The Abresch-Rosenberg differential $Q dz^2$ is a holomorphic
quadratic differential on any CMC surface in $\Ek$.
\end{teo}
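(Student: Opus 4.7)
The plan is to verify directly that $\partial_{\bar z} Q = 0$ whenever $H$ is constant, by plugging into the definition of $Q$ and using the integrability (structure) equations of surfaces in $\Ek$ recorded in \cite{Dan1,FeMi2}. Since $H$ and $\tau$ are real and $\kappa, \tau$ are constants, the product rule gives
\begin{equation*}
Q_{\bar z} = 2H_{\bar z}\,p + 2(H+i\tau)\,p_{\bar z} - 2(\kappa-4\tau^2)\,A\,A_{\bar z},
\end{equation*}
so when $H$ is constant it suffices to show that
\begin{equation*}
(H+i\tau)\,p_{\bar z} = (\kappa-4\tau^2)\,A\,A_{\bar z}.
\end{equation*}

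The two ingredients I would invoke are the Codazzi-type equation and the compatibility equation for the complex function $A$. In the conformal notation $\{\lambda,u,H,p,A\}$ of the preceding subsection, these read, respectively,
\begin{equation*}
p_{\bar z} \;=\; \tfrac{\lambda}{2}\,H_z \;+\;(\kappa - 4\tau^2)\,u\,A\,\bar A\cdot\text{(coefficient)},
\qquad
A_{\bar z} \;=\; \tfrac{\lambda\,u}{2}\,(H - i\tau),
\end{equation*}
together with the relation $(\mathbf{C.4})$ stated in the excerpt, namely $4|A|^2/\lambda = 1 - u^2$, which allows one to convert $u\,A\bar A$ into an expression proportional to $\lambda u(1-u^2)/4$ when needed. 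The exact coefficients in the Codazzi equation are the delicate bit (this is essentially where the Abresch--Rosenberg trick lives), but the point is that the $H_z$ piece vanishes for CMC surfaces and the remaining piece has precisely the form required to combine with $A\,A_{\bar z}$.

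Substituting the expression for $A_{\bar z}$ into the right-hand side of the desired identity gives
\begin{equation*}
(\kappa - 4\tau^2)\,A\,A_{\bar z} \;=\; \tfrac{\lambda\,u}{2}\,(\kappa-4\tau^2)\,(H-i\tau)\,A,
\end{equation*}
which, using $|H+i\tau|^2 = H^2+\tau^2$ and the factorization $(H+i\tau)(H-i\tau)$, matches the Codazzi contribution to $(H+i\tau)\,p_{\bar z}$ under the CMC hypothesis $H_z\equiv 0$. Since $H_{\bar z} = 0$ as well, the first term in the expansion of $Q_{\bar z}$ drops out, and the whole expression collapses to zero.

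The main obstacle is purely bookkeeping: getting the precise numerical coefficient in the Codazzi equation (the factor multiplying $(\kappa-4\tau^2)\,u\,A\bar A$) in the conformal conventions used, so that the combination with $A A_{\bar z}$ cancels exactly rather than up to a constant. Once the conventions are fixed (as in \cite{Dan1,FeMi2}), the cancellation is a one-line algebraic check, and the holomorphicity of $Q\,dz^2$ follows. No analytic hypothesis beyond $H$ being constant is used.
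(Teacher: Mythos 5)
The paper does not actually prove this statement; it is imported from \cite{AbRo1,AbRo2}, and the proof given there (and in \cite{Dan1,FeMi2}) is exactly the direct computation you outline: differentiate $Q$, insert the Codazzi equation for $p$ and the structure equation for $A$, and watch the two terms cancel when $H$ is constant. So your strategy is the right one. The difficulty is that you have deferred as ``bookkeeping'' precisely the step that constitutes the theorem, and the two formulas you do commit to are both wrong in ways that would prevent the cancellation from closing.

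First, the Codazzi equation in these conventions reads
\begin{equation*}
p_{\bar z} \;=\; \frac{\lambda}{2}\Bigl(H_z + (\kappa-4\tau^2)\,u\,A\Bigr),
\end{equation*}
with the term $u\,A$, not $u\,A\bar A = u|A|^2$. This is not a matter of an undetermined coefficient: $u|A|^2$ is real, so your version would make $p_{\bar z}$ real on a CMC surface, and a real quantity cannot cancel against $(\kappa-4\tau^2)A\,A_{\bar z}$, which is proportional to the complex function $A$; relation ${\bf (C.4)}$ plays no role here. Second, the structure equation for $A$ in the conventions of this paper is $A_{\bar z} = \tfrac{\lambda u}{2}(H+i\tau)$, with the \emph{same} factor $H+i\tau$ that appears in the definition of $Q$ --- this coincidence is the entire reason the combination $2(H+i\tau)p-(\kappa-4\tau^2)A^2$ is the right one. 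With your sign $H-i\tau$, the two terms of $Q_{\bar z}$ would differ by the non-constant unimodular factor $(H-i\tau)/(H+i\tau)$, and the appeal to $|H+i\tau|^2=H^2+\tau^2$ does not resolve this; that paragraph of your argument does not reduce to an identity. With the corrected equations the proof is one line: for $H$ constant,
\begin{equation*}
Q_{\bar z} \;=\; 2(H+i\tau)\,\frac{\lambda}{2}(\kappa-4\tau^2)\,u\,A \;-\; 2(\kappa-4\tau^2)\,A\,\frac{\lambda u}{2}(H+i\tau) \;=\; 0 .
\end{equation*}
As written, your proposal asserts that the delicate coefficients will match without exhibiting them, and the versions of the equations you actually wrote down would make the matching fail.
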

Unlike the case of $\R^3$, where the holomorphicity of the Hopf
differential necessarily implies the constancy of the mean
curvature, there exist non-CMC surfaces in some homogeneous
spaces $\Ek$ with holomorphic Abresch-Rosenberg differential. A
detailed discussion of this converse problem can be found in
\cite{FeMi2}.

In \cite{Dan1} B. Daniel established a local isometric Lawson type
correspondence between simply connected CMC surfaces in the
homogeneous spaces $\Ek$. Surfaces related by this correspondence
are usually called {\em sister surfaces}.

As a particular case this isometric correspondence relates
minimal surfaces in $\Nil$ and $H=1/2$ surfaces in
$\H^2\times\R$. In terms of their fundamental data the
correspondence is given as follows: given a simply connected
minimal surface in $\Nil$ with fundamental data
\begin{equation}\label{eq:fund1}
\{ \landa,u,H=0,p,A\} \end{equation}
 its sister $H=1/2$ surface in
$\H^2\times\R$ (which is unique up to isometries of $\H^2\times
\R$ preserving the orientation of both base and fiber) has
fundamental data
\begin{equation}\label{eq:fund2}
\{ \landa, u, H=1/2, -ip,-iA \}. \end{equation} In particular,
their respective Abresch-Rosenberg differentials are opposite to
one another. Moreover, the two sister surfaces have the same
induced metric, and one of them is a local graph if and only if
its sister immersion is also a local graph (if and only if $u\neq
0$). We remark that in this paper we shall prove that the sister
surface of an entire minimal graph in $\Nil$ is always an entire
$H=1/2$ graph in $\H^2\times \R$ (see Proposition \ref{proH2}).


\subsection*{The hyperbolic Gauss map in $\H^2\times \R$}

One of the key tools of this work is the existence of a Gauss-type map for surfaces in $\H^2\times \R$ with special properties, introduced in \cite{FeMi1}. Next, we analyze in more detail than in \cite{FeMi1} the geometrical meaning of this Gauss map.

First, let $\L^4$ denote the Minkowski $4$-space with canonical coordinates $(x_0,x_1,x_2,x_3)$ and the Lorentzian metric $\esiz,\esde = -dx_0^2 + \sum_{i=1}^3 dx_i^2$. We shall regard as hyperquadrics in $\L^4$  in the usual way the hyperbolic $3$-space $\H^3$, the de Sitter $3$-space $\S_1^3$, the positive null cone $\N^3$, and the product space $\H^2\times \R$ (see \cite{FeMi1}).

Let $\S_{\8}^2$ denote the ideal boundary of $\H^3$. We have the usual identifications (see \cite{Bry,GaMi} for instance)  $$\S_{\8}^2 \equiv \N^3/\R_+ \equiv \N^3 \cap \{x_0 =1\} \equiv \{1\} \times \S^2 \equiv \S^2\subset \R^3.$$ Let $\psi= (N,h):\Sigma\flecha \H^2\times \R$ be an immersed surface, and let $\eta:\Sigma\flecha \S_1^3$ denote its unit normal. Then $\eta + N $ takes its values in $\N^3$, and $[\eta + N] \in \N^3/\R_+ \equiv \S_{\8}^2$ is the point at infinity reached by the unique geodesic of $\H^3$ starting at $N $ with initial speed $\eta$.

If $\psi$ has regular vertical projection (i.e. $N:\Sigma\flecha \H^2$ is a local diffeomorphism), and $u$ denotes the last coordinate of $\eta$, it turns out that $u\neq 0$ at every point (we assume thus that $u>0$ up to a change of orientation). Consequently, the map $$\xi = \frac{1}{u} (\eta + N):\Sigma\flecha \L^4$$ takes its values in $\N^3\cap \{x_3=1\}$. Thus, there is some $G:\Sigma\flecha \H^2$ with $\xi = (G,1)$.

Now, observe that as $u>0$, it happens that $[\eta + N] = [\xi]$ takes its values in the hemisphere $$\S_{\8,+}^2 \equiv (\N^3 \cap \{x_3 >0\})/\R_+ \equiv (\{1\}\times \S^2) \cap \{x_3 >0\}\equiv \S^2 \cap \{x_3 >0\}.$$ So, if we write $G=(G_0,G_1,G_2)$ and $[\eta +N] =(1,y_1,y_2,y_3)\equiv (y_1,y_2,y_3)$, we have that, using the projective structure of $\N^3$, $$(y_1,y_2,y_3) = [\eta + N] = [\xi] = \frac{1}{G_0} (G_1,G_2,1) \in \S_+^2:= \S^2\cap \{x_3 >0\}.$$ Composing finally with the vertical projection $\S_+^2 \flecha \D$ we obtain a map $$z\in \Sigma \mapsto (G_0,G_1,G_2)(z) \in \H^2\subset \L^3 \mapsto \left( \frac{G_1}{G_0}, \frac{G_2}{G_0} \right) (z)\in \D.$$ This last coordinate relation is the usual identification between the Minkowski model $\mathcal{H}^2 = \{x\in \L^3 : \esiz x,x\esde = -1, x_0 >0\}$ and the Klein (or projective) model $(\D,ds_K^2)$ of the hyperbolic space $\H^2$.

In other words, $G:\Sigma\flecha \H^2$ can be seen as the map sending each point $(N_0,h_0)$ of the surface to the positive hemisphere at infinity $\S_{\8,+}^2$ of $\H^3$ endowed with the Klein metric of $\H^2$ (after vertical projection), by considering the unique geodesic of $\H^3$ starting at $N_0$ with initial speed $\eta$ (the unit normal at the point).

\begin{defi}
The above map $G:\Sigma\flecha \H^2$ will be called the \emph{hyperbolic Gauss map} of the surface with regular vertical projection $\psi:\Sigma\flecha \H^2\times \R$.
\end{defi}

This interpretation of the hyperbolic Gauss map in terms of the ideal boundary of $\H^3$ enables us to visualize it directly in the cylindrical model of $\H^2\times \R$. This might be useful for a better understanding of the geometrical information carried
by this hyperbolic Gauss map.

So, let us view $\H^2\times \R$ in its cylindrical model $(\D, ds_P^2)\times \R$, where
$$ds^2_P=\frac{4}{(1-|z|^2)^2}|dz|^2, \hspace{1cm} z\in \D = \{z\in \C: |z|<1\},$$ is the Poincaré metric of $\H^2$. We consider as before an immersed surface $\psi:\Sigma\flecha \H^2\times \R$ with regular vertical projection and upwards pointing unit normal. Let $z_0\in\Sigma$ and write $\psi(z_0)=(N_0,h_0)$. Then
we can visualize the hyperbolic $3$-space $\H^3$ (in its Poincaré
ball model) {\em inside} the cylinder $\H^2\times\R$ with equator
at $\H^2\times\{0\}$ (see Figure \ref{fig:gauss}). We now
consider the unique oriented geodesic $\gamma$ in $\H^3$ starting
at $(N_0,0)$ with tangent vector $\eta(z_0)$. Label
$G(z_0)\in\partial_\infty\H^3$ as the limit point of $\gamma$ in
the asymptotic boundary of $\H^3$,
$\partial_\infty\H^3\equiv\S^2$. Let us observe that, since we
have chosen $\eta$ upwards pointing, $G(z_0)$ actually lies in
the upper hemisphere of the sphere at infinity $\S_+^2$. At last, we project $\S_+^2$ into $\D$, and we endow $\D$ with the Klein metric (thus $(\D,ds_K^2)\equiv \H^2$).

\begin{figure}[htbp]\begin{center}
\includegraphics[width=0.45\textwidth]{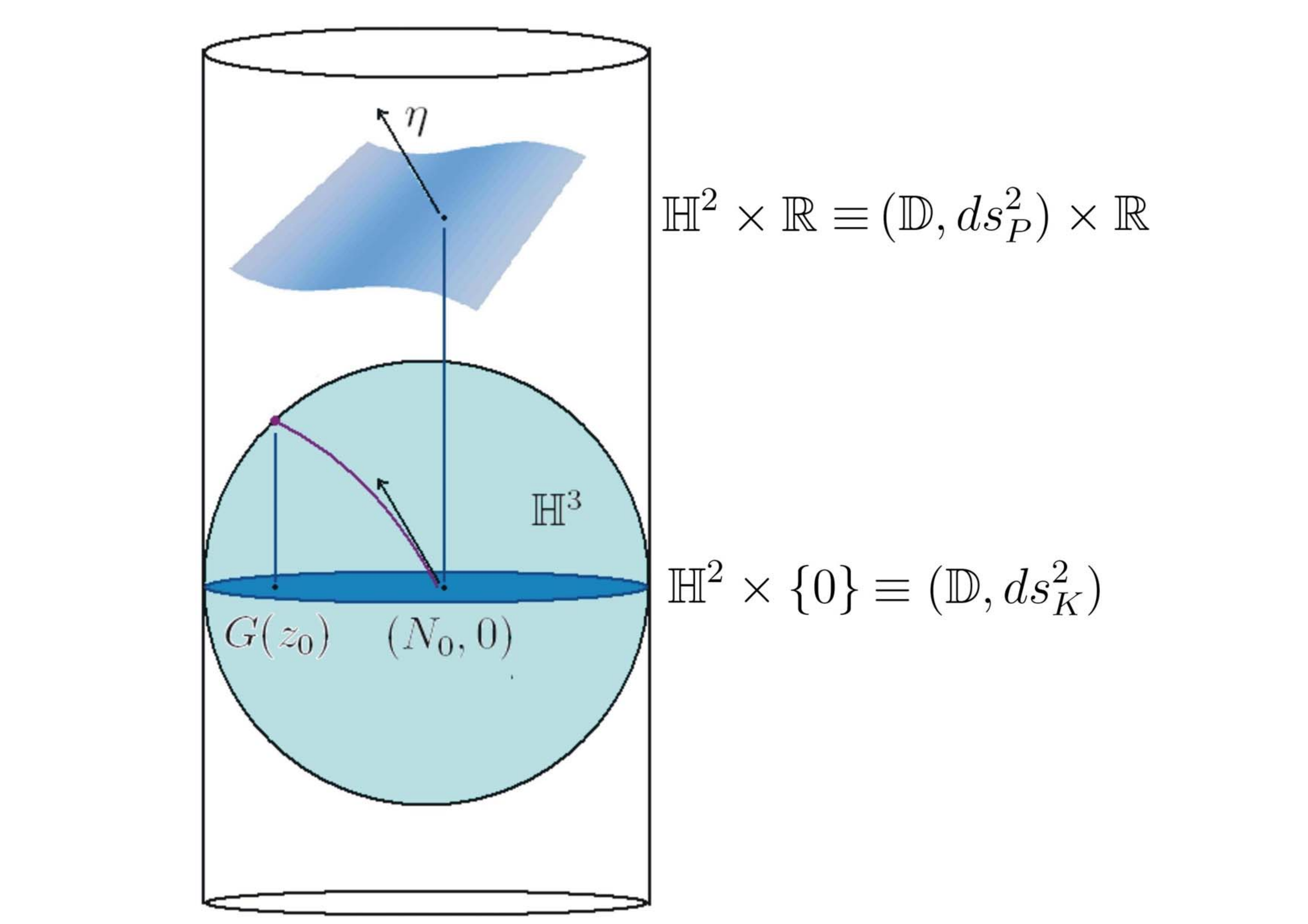}\caption{The hyperbolic Gauss map for surfaces in $\H^2\times\R$.}
\label{fig:gauss}
\end{center}\end{figure}

The result of this is a map $G:\Sigma\to(\D,ds_K^2)\equiv \H^2$ which is the {\em hyperbolic Gauss map} of $\psi$. Its most remarkable property is that it is
harmonic for $H=1/2$ surfaces, as we explain next.

\subsection*{CMC one half surfaces in $\H^2\times\R$}\label{sub:unmedio}

The existence of the {\em canonical examples} of minimal surfaces
in $\Nil$ will be shown in terms of their sister surfaces in
$\H^2\times\R$, so let us first recall some facts about surfaces
with $H=1/2$ in $\H^2\times \R$.

To start, let $G:\Sigma\to\H^2$ denote a harmonic map from the Riemann
surface $\Sigma$ into $\H^2$. Then the quadratic differential
$\esc{G_z}{G_z}dz^2$ is holomorphic on $\Sigma$, and it is called
the \emph{Hopf differential} of the harmonic map $G$. Following
\cite{FeMi1}, we say that a harmonic map {\em admits Weierstrass
data} $\{Q,\tau_0\}$ if there exists $\tau_0:\Sigma\flecha
(0,\8)$ a smooth positive function such that
$$\esc{dG}{dG}=Qdz^2+\left(\frac{\tau_0}{4} +
\frac{4|Q|^2}{\tau_0}\right)|dz|^2+\bar{Q}d\bar{z}^2.$$ The
connection between $H=1/2$ surfaces in $\H^2\times \R$ and
harmonic maps into $\H^2$ was studied in \cite{FeMi1}. We
condense the most relevant features of this connection in the
following result.

\vspace{0.2cm}

\noindent {\bf Convention:} from now on, a surface in $\H^2\times
\R$ with r.v.p. will always be assumed to be \emph{canonically
oriented}, meaning this that its unit normal is upwards pointing.

\begin{teo}[\cite{FeMi1}]\label{th:american}
The hyperbolic Gauss map $G:\Sigma\flecha \H^2$ of a $H=1/2$
surface $\psi:\Sigma\flecha \H^2\times\R$ with regular vertical
projection is harmonic. Moreover, if $-Qdz^2$  denotes the
Abresch-Rosenberg differential of the surface, $\landa|dz|^2$ its
metric and $u$ its angle function, then the quantities
$\{Q,\landa u^2\}$ are Weierstrass data for $G$. In particular
the Hopf differential of $G$ is $Qdz^2$.

Conversely, let $\Sigma$ be a simply-connected Riemann surface and
$G:\Sigma\to\H^2$ be a harmonic map admitting Weierstrass data
$\{Q,\tau_0\}$.Then
\begin{enumerate}
\item If $G$ is singular on an open subset
of $\Sigma$ then it parametrizes a piece of a geodesic in $\H^2$.
Moreover, there exists a $1$-parameter family of $H=1/2$ graphs
in $\H^2\times \R$, invariant under a $1$-parameter subgroup of
isometries, such that $G$ is the hyperbolic Gauss map of any of
them. These surfaces are described in \cite{Sa} (see also
\cite{FeMi1,Dan2}), and they are open pieces of the sister
surfaces of the saddle-type minimal graphs in $\Nil$.
\item If not, let $z_0\in\Sigma$ be a regular point of $G$. Then
for any $\vart_0\in\C$ there exists a unique $H=1/2$ surface
$\psi=(N,h):\Sigma\to\H^2\times\R$ with r.v.p., having $G$ as its
hyperbolic Gauss map and satisfying:
\begin{enumerate}[(a)]
\item $\tau_0=\landa u^2$, where $\landa$ is the conformal factor
of the metric of $\psi$ and $u$ its angle function,
\item $h_z(z_0)=\vart_0$.
\end{enumerate}
Moreover, $$N=\frac{8\Re \Big(G_z \big( \tau_0
h_{\bar{z}}-4\overline{Q}h_z\big)\Big)}{\tau_0^2-16|Q|^2}  + G\,
\sqrt{\displaystyle\frac{\tau_0+4|h_z|^2}{\tau_0}},$$
 and $h:\Sigma\flecha \R$ is the unique (up to additive constants) solution to the differential system below such that $h_z(z_0)=\vart_0$:
  \begin{equation}\label{eq:A}
\left\{ \begin{array}{lll}
 h_{zz} & = & (\log\tau_0)_z\,  h_z - Q\, \sqrt{\displaystyle\frac{\tau_0+4|h_z|^2}{\tau_0}}, \\
\\
h_{z\bar z} &=&
\displaystyle\frac{1}{4}\sqrt{\tau_0(\tau_0+4|h_z|^2)}.
\end{array}\right.
\end{equation}

\end{enumerate}
\end{teo}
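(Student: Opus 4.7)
The theorem has a forward direction and a converse, and I would structure the plan in two main pieces.

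For the forward direction, I would compute $G_z$ directly from $G = (\eta + N)/u$ (viewed in the Minkowski model $\L^3$ of $\H^2$) using the Gauss--Weingarten formulae in $\H^2\times \R$. A bookkeeping calculation involving the shape operator, the relation (C.4), the Codazzi equation for an $H=1/2$ immersion, and the identities $\langle N,N\rangle = -1$, $\langle \eta,\eta\rangle = 1$, $\langle N,\eta\rangle = 0$ should produce $\langle G_z,G_z\rangle = Q$ (matching the convention that the Abresch-Rosenberg differential is $-Qdz^2$) and a value of $\langle G_z, G_{\bar z}\rangle$ which, after identifying $\tau_0 = \landa u^2$, matches the Weierstrass relation. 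Harmonicity of $G$ is then obtained by computing $G_{z\bar z}$ and checking that its component tangent to $\H^2$ at $G$ vanishes; holomorphicity of the Abresch-Rosenberg differential is what makes this cancellation happen. Since this is the main theorem of \cite{FeMi1}, I would present only the key computations and refer to that paper for the details.

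For the converse in the degenerate case (Case 1), the Weierstrass relation forces $\mathrm{rank}\, dG \le 1$ wherever $G$ is singular, and a harmonic map $G:\Sigma\to\H^2$ of rank at most one maps into a geodesic of $\H^2$. The $H=1/2$ surfaces in $\H^2\times \R$ having such a degenerate $G$ as hyperbolic Gauss map are classified in \cite{Sa}: they are the invariant examples which, via the sister correspondence, are the saddle-type minimal graphs \eqref{saddle} in $\Nil$. I would treat this case mainly by citation.

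The substantive part is Case 2. The plan is: first, view \eqref{eq:A} as an overdetermined system for $h$ and verify the compatibility condition $\partial_{\bar z}(h_{zz}) = \partial_z(h_{z\bar z})$. This requires (i) holomorphicity of $Q$, (ii) harmonicity of $G$ in the form $\langle G_z, G_z\rangle_{\bar z} = 0$, and (iii) the Weierstrass relation, which controls $(\tau_0)_z$ and $(\log \tau_0)_z$ in terms of $G_z, G_{\bar z}, Q$. Once compatibility is checked, on the simply-connected $\Sigma$ standard PDE theory yields a unique solution $h$ with $h_z(z_0) = \vart_0$, up to an additive real constant. I would then define $N$ by the displayed formula and verify algebraically, using the Weierstrass relation, that $\langle N, N\rangle_{\L^3} = -1$ and that $N$ lies in the future-pointing hyperboloid, so that $N:\Sigma \to \H^2$. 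Finally, I would check that the immersion $(N,h)$ has mean curvature $1/2$, Abresch-Rosenberg differential $-Qdz^2$, angle function and conformal factor satisfying $\landa u^2 = \tau_0$, and hyperbolic Gauss map equal to $G$; each of these is obtained by running the computations of the forward direction in reverse.

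The main obstacle is the compatibility check for \eqref{eq:A}: matching all the cross-derivative terms demands careful use of the Weierstrass structure in both its inner-product form and its consequence for the derivatives of $\tau_0$. Once compatibility is in hand, uniqueness in Case 2 is automatic, because the explicit formula for $N$ expresses it as a function of $G$, $Q$, $\tau_0$, $h$ and $h_z$, so the surface is determined by the data $\{G, Q, \tau_0, \vart_0\}$.
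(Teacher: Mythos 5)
The paper does not prove this theorem: it is imported verbatim from \cite{FeMi1}, so there is no internal proof to compare against, and your outline is essentially the strategy of that reference (direct computation of $\esc{dG}{dG}$ and of $G_{z\bar z}$ for the forward direction, classification via \cite{Sa} in the degenerate case, and integrability of the overdetermined system \eqref{eq:A} plus the explicit formula for $N$ in the regular case). The one step you should make explicit is that the compatibility of \eqref{eq:A} does not follow from the Weierstrass relation alone as an algebraic identity on first derivatives: you must first combine it with harmonicity of $G$ to derive the second-order equation $(\log\tau_0)_{z\bar z}=\tau_0/8-2|Q|^2/\tau_0$ of \eqref{gcl3} (this is \cite[Lemma 4]{FeMi1}, and it is precisely what cancels the cross-derivative terms). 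Likewise, your ``uniqueness is automatic'' needs the forward computation run once more to show that \emph{any} $H=1/2$ surface with the prescribed Gauss map, $\tau_0=\landa u^2$ and $h_z(z_0)=\vart_0$ has height function solving \eqref{eq:A} and vertical projection given by the displayed formula; with that said, the plan is correct.
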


\begin{remark}\label{loqueisa}
A straightforward consequence of the definition of the hyperbolic
Gauss map is that if two surfaces in $\H^2\times\R$ with r.v.p.
are congruent by an isometry preserving the orientation of the
fibers, then their respective hyperbolic Gauss maps differ by an
isometry of $\H^2$. As a result, if two harmonic maps $G_1,G_2$
into $\H^2$ differ by an isometry, then the two families of
$H=1/2$ surfaces obtained from $G_1$ and $G_2$ via Theorem
\ref{th:american} just differ by an isometry of $\H^2\times \R$
preserving the orientation of the fibers. This is a consequence
of the uniqueness part of Theorem \ref{th:american}.
\end{remark}


\subsection*{The canonical examples in $\Nil$}\label{sub:canonical}

In what follows, $\Sigma$ will denote a simply-connected open
Riemann surface, that will be assumed without loss of generality
to be $\Sigma=\C$ or $\Sigma=\D$.

In \cite{FeMi1} we proved the existence of a large family of
complete minimal surfaces in $\Nil$ that are local vertical
graphs. More specifically, we showed:

\begin{teo}[\cite{FeMi1}]\label{canex}
Let $Q dz^2$ denote a holomorphic quadratic differential on
$\Sigma$, which is non-zero if $\Sigma =\C$. Then there exists
(at least) a complete minimal surface with regular vertical
projection in $\Nil$ whose Abresch-Rosenberg differential is $Q
dz^2$.
\end{teo}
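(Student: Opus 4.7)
The plan is to combine Daniel's sister correspondence with the hyperbolic-Gauss-map construction of Theorem \ref{th:american} and the Wan--Au theory of complete spacelike CMC surfaces in $\L^3$. Because sister surfaces share the same induced metric and carry opposite Abresch--Rosenberg differentials, producing a complete minimal surface in $\Nil$ with regular vertical projection and AR-differential $Qdz^2$ is equivalent to producing a complete $H=1/2$ immersion $\psi:\Sigma\to\H^2\times\R$ with r.v.p.\ whose AR-differential is $-Qdz^2$; its sister in $\Nil$ will then be the desired example.

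By Theorem \ref{th:american}, $\psi$ can be manufactured from a harmonic map $G:\Sigma\to\H^2$ admitting Weierstrass data $\{Q,\tau_0\}$, together with an arbitrary choice of $\vartheta_0=h_z(z_0)\in\C$. So everything reduces to finding a harmonic map $G:\Sigma\to\H^2$ whose Hopf differential is precisely the prescribed $Qdz^2$. This is what the Wan--Au theory provides: for any non-identically-zero holomorphic quadratic differential on $\C$, Wan supplies a harmonic diffeomorphism $G:\C\to\H^2$ with that Hopf differential, realized as the Gauss map of a complete spacelike $H=1$ surface in $\L^3$; Au--Tam--Wan give the analogous statement on $\D$. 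Once $G$ is fixed, the function $\tau_0$ is recovered from the pullback metric $G^*g_{\H^2}=Qdz^2+\mu|dz|^2+\bar Q\,d\bar z^2$ by solving the quadratic $\tau_0/4+4|Q|^2/\tau_0=\mu$; its two positive solutions are $\tau_0^{\pm}=2\mu\pm 2\sqrt{\mu^2-4|Q|^2}$. I pick the branch $\tau_0=\tau_0^{+}$, which satisfies $\tau_0\geq 2\mu$, and feed $\{Q,\tau_0\}$ into Theorem \ref{th:american} to obtain the desired immersion $\psi$ with AR-differential $-Qdz^2$.

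The main obstacle will be the completeness of $\psi$. Using relation ${\bf (C.4)}$ and $A=h_z$ (valid in $\H^2\times\R$), the conformal factor $\landa$ of the induced metric of $\psi$ satisfies $\landa=\tau_0/u^2\geq\tau_0$ because $u\in(0,1]$, so completeness of $\tau_0|dz|^2$ forces completeness of $\landa|dz|^2$ (a conformal metric bounded below by a complete one is itself complete, as any Cauchy sequence for the upper metric is also Cauchy for the lower one). The Wan--Au theorem provides exactly the completeness of $\mu|dz|^2$ on $\Sigma$ arising from the $\L^3$ surface attached to $G$; combined with the inequality $\tau_0=\tau_0^{+}\geq 2\mu$, this yields the completeness of $\tau_0|dz|^2$ and hence of $\psi$. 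Daniel's correspondence then delivers a complete minimal immersion in $\Nil$ with r.v.p.\ whose Abresch--Rosenberg differential is $Qdz^2$, proving the theorem.
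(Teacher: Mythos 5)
Your argument is essentially the paper's own construction: Wan--Au supply a complete spacelike CMC $1/2$ surface in $\L^3$ with Hopf differential $Qdz^2$, its harmonic Gauss map is fed into Theorem \ref{th:american}, Daniel's sister correspondence carries the resulting complete $H=1/2$ graphs (with Abresch--Rosenberg differential $-Qdz^2$) over to $\Nil$, and the completeness step via $\landa=\tau_0/u^2\geq\tau_0$ is exactly the one used in the paper. The only (harmless) deviations are cosmetic: the paper takes $\tau_0$ directly to be the induced metric of the $\L^3$ surface, so Wan--Au's completeness applies verbatim and one never has to choose a branch of the quadratic or verify that $\tau_0^{+}$ is smooth where the discriminant degenerates; and $G$ is not in general a harmonic \emph{diffeomorphism} --- for $Qdz^2=c\,dz^2$ on $\C$ the Gauss map of the hyperbolic cylinder is everywhere singular and the construction passes through Case 1 of Theorem \ref{th:american} instead.
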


It is relevant for our purposes to explain how these examples are
constructed. Let $Qdz^2$ be a holomorphic quadratic differential
as above. By \cite{Wan,WaAu} there exists a complete spacelike
CMC $1/2$ surface $f:\Sigma\to\L^3$ in the Lorentz-Minkowski
$3$-space $\L^3$ whose Hopf differential is $Qdz^2$. Here, recall
that $\L^3$ is just $\R^3$ endowed with the Lorentzian metric
$\esiz, \esde = dx_1^2 +dx_2^2 -dx_3^2$ in canonical coordinates,
and that an immersed surface $f:\Sigma\flecha \L^3$ is called
\emph{spacelike} if its induced metric $\esiz df,df\esde$ is
Riemannian.

Let $\nu:\Sigma\flecha \H^2 \cup \H_-^2\subset \L^3$ denote its
Gauss map, which is well known to be harmonic. Now let
$G:\Sigma\flecha \H^2$ denote the harmonic map given by $G=\nu$
if $\nu$ is future-pointing, or by the vertical symmetry of $\nu$
in $\L^3$ otherwise. Thus, if $\esiz df , df\esde = \tau_0
|dz|^2$ denotes the induced metric of $f$, it turns out (see
\cite{FeMi1}) that $\{Q,\tau_0\}$ are Weierstrass data for $G$.
Then the $H=1/2$ surfaces in $\H^2\times \R$ constructed from $G$
and $\tau_0$ via Theorem \ref{th:american} are complete (since
$\tau_0 |dz|^2 = \landa u^2 |dz|^2$ is complete), have regular
vertical projection, and Abresch-Rosenberg differential $-Qdz^2$.
Consequently, the sister surfaces of these examples are complete
minimal surfaces in $\Nil$ with regular vertical projection and
whose Abresch-Rosenberg differential is $Q dz^2$.

Let us mention that $f:\Sigma\flecha \L^3$ is unique, in the
sense that if $\tilde{f}:\Sigma\flecha \L^3$ is another complete
spacelike $H=1/2$ surface with Hopf differential $Qdz^2$, then
$\tilde{f} = \Phi \circ f$ for some positive rigid motion $\Phi$
of $\L^3$. Thereby, if we start with some $\tilde{f}$ different
from (but congruent to) $f$, the harmonic map
$\tilde{G}:\Sigma\flecha \H^2$ we obtain following the process
above differs from $G$ at most by an isometry of $\H^2$.

Thus, bearing in mind Remark \ref{loqueisa} we conclude that the
families of $H=1/2$ surfaces in $\H^2\times \R$ obtained,
respectively, from $f$ and $\tilde{f}$, differ at most by an
isometry of $\H^2\times \R$ that preserves the orientation of the
fibers. This means by the sister correspondence that no new
examples in $\Nil$ appear if in the above process we consider
$\Phi \circ f$ instead of $f$ for any positive rigid motion
$\Phi$ of $\L^3$.

Finally, let us comment some properties of these minimal examples
in $\Nil$, that one should keep in mind for what follows.

\begin{itemize}
\item
By Theorem \ref{th:american} and the uniqueness property we have
just pointed out, the above procedure yields in the generic case
(i.e. in the case where the Gauss map $G$ does not have a
$1$-parameter symmetry group) a $2$-parameter family of
non-congruent complete minimal surfaces in $\Nil$ with the same
Abresch-Rosenberg differential. These two continuous parameters
come from the variation of the initial condition $h_z
(z_0)=\vart_0$. Although the elements of this family are
non-congruent in general (e.g. if the Gauss map has no
symmetries), it happens that for minimal surfaces in $\Nil$
invariant under a $1$-parameter subgroup of isometries many of
the elements of the $2$-parameter family are congruent to each
other. A detailed discussion on this topic can be found in page
1165 of \cite{FeMi1} (see also \cite{Dan2}).
 \item
The only minimal surfaces in $\Nil$ with $Q=0$ and $\Sigma =\C$
are vertical planes, which do not have regular vertical
projection. Besides, minimal surfaces with $Q=0$ and $\Sigma =
\D$ are the horizontal minimal umbrellas \cite{AbRo2,Dan1}.
 \item
When $\Sigma=\C$ and $Qdz^2=c\,dz^2$, $c\in\C\setminus\{0\}$ the
spacelike surface $f:\Sigma\to\L^3$ defined above is the
hyperbolic cylinder, whose Gauss map parametrizes a piece of a
geodesic in $\H^2$. This corresponds to Case 1 in Theorem
\ref{th:american}, and therefore, the corresponding family of
canonical examples is the $1$-parameter family of saddle type
surfaces, as stated in the introduction.
 \item
Any of the minimal surfaces in $\Nil$ given by Theorem
\ref{canex} verifies that $u^2 \, g$ is a complete Riemannian
metric, where here $g$ and $u$ stand, respectively, for the
induced metric and the angle function of the minimal surface.
\end{itemize}

\begin{defi}[Canonical Examples]\label{def:canonical}
A minimal surface $X:\Sigma\to\Nil$ will be called a
\emph{canonical example} if it is one of the complete minimal
surfaces with regular vertical projection given by Theorem
\ref{canex}.
\end{defi}

We will show in the next section that the canonical examples in
$\Nil$ are exactly the entire minimal graphs in $\Nil$. This
justifies the terminology we have used.


\section{Solution to the Bernstein problem in $\Nil$}\label{mainsec}

This section is devoted to the proof of Theorem \ref{th:main}.
First of all, we prove that any canonical example is an entire
graph. This will be obtained as a consequence of the following
Lemma.

\begin{lem}\label{lem:autov} Let $X:\Sigma\to\Ek$ be a surface
in the homogeneous $3$-manifold $\Ek$. Denote by $F:\Sigma\to\Mk$
its vertical projection and by $g_F$ the induced metric of $F$ in
$\Sigma$. Then, for every $v\in \X (\Sigma)$,
 \begin{equation}\label{trescero}
 u^2 \, g(v,v) \leq g_F (v,v)\leq g (v,v) ,
  \end{equation}
where $g$ denotes the induced metric of the immersion $X$ and $u$
is the angle function.
\end{lem}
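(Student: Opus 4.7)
The plan is to use that $\pi:\Ek\to\Mk$ is a Riemannian submersion whose vertical distribution is spanned by the unit Killing field $\xi$. For any $w\in T_p\Ek$ we may decompose $w=w^H+\esc{w}{\xi}\xi$ into horizontal and vertical parts, and because $d\pi$ is an isometry on horizontal vectors and kills $\xi$, we have $|d\pi(w)|^2 = |w|^2 - \esc{w}{\xi}^2$. Applying this with $w=dX(v)$ gives the key identity
\begin{equation}\label{eq:plankey}
 g_F(v,v) \;=\; g(v,v) \;-\; \esc{dX(v)}{\xi}^2.
\end{equation}
The upper bound in \eqref{trescero} is then immediate since the subtracted term is non-negative.

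For the lower bound, I would exploit the tangential/normal decomposition of $\xi$ along the surface. By the very definition of the vector field $T\in\X(\Sigma)$ recalled in the excerpt, we have $dX(T)=\xi-u\eta$, so $\xi = dX(T)+u\eta$. Since $\eta$ is normal to the immersion we get
$$\esc{dX(v)}{\xi} \;=\; \esc{dX(v)}{dX(T)} \;=\; g(v,T).$$
From $|\xi|^2=1$ together with $dX(T)=\xi-u\eta$ we also compute
$$g(T,T) \;=\; |dX(T)|^2 \;=\; |\xi|^2 - 2u\esc{\xi}{\eta} + u^2 |\eta|^2 \;=\; 1-u^2.$$
Cauchy--Schwarz now yields $\esc{dX(v)}{\xi}^2 = g(v,T)^2 \leq g(v,v)\,g(T,T) = (1-u^2)\,g(v,v)$. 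Plugging this back into \eqref{eq:plankey} gives
$$g_F(v,v) \;\geq\; g(v,v) - (1-u^2)g(v,v) \;=\; u^2\,g(v,v),$$
which is the desired lower bound.

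There is essentially no hard step: the proof is a short decomposition-and-Cauchy-Schwarz argument, and the only thing to be a little careful about is using consistently the identification of $d\pi$ with the horizontal projection (so that the formula \eqref{eq:plankey} is clean) and the fact that the Killing field $\xi$ has unit length, which is what makes $g(T,T)=1-u^2$ hold and, ultimately, what forces the sharp constant $u^2$ on the left.
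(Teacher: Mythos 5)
Your proof is correct, and it reaches the inequality by a genuinely different (coordinate-free) route than the paper. The paper works in a conformal parameter $z$: from $F_z = X_z - A\xi$ it writes $g_F = -A^2\,dz^2 + (\landa - 2|A|^2)|dz|^2 - \bar{A}^2\,d\bar{z}^2$, diagonalizes this form against the flat metric $|dz|^2$, and uses the compatibility equation {\bf (C.4)} to identify the extreme eigenvalues as $\rho_1=\landa$ and $\rho_2 = \landa - 4|A|^2 = \landa u^2$, from which both bounds follow. You instead use the Riemannian-submersion identity $g_F(v,v) = g(v,v) - \esc{dX(v)}{\xi}^2$ (valid because the vertical Killing field $\xi$ is unit and $d\pi$ is an isometry on $\xi^{\perp}$), then the tangential decomposition $\xi = dX(T) + u\eta$ to get $\esc{dX(v)}{\xi} = g(v,T)$ and $g(T,T) = 1 - u^2$, and finish with Cauchy--Schwarz. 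The two arguments encode the same linear algebra: your rank-one correction $g(v,T)^2$ has top eigenvalue $g(T,T) = 1-u^2$ relative to $g$, which is exactly the gap between the paper's eigenvalues, and your identity $g(T,T)=1-u^2$ is precisely {\bf (C.4)} rederived from $|\xi|=1$. What your version buys is independence from the conformal parameter, an explicit identification of where each bound is attained (the directions $T$ and $T^{\perp}$), and the observation that the lemma holds in any Riemannian fibration with unit vertical Killing field; what the paper's version buys is that the quantities $\landa$, $A$, $u$ appearing in {\bf (C.4)} and in \eqref{eq:proyecc} are exactly the ones reused later in the proof of Lemma \ref{lem:sister}.
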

\begin{proof}
Let $z$ be a conformal parameter for $X$ so that the induced
metric on $\Sigma$ is written as $g=\landa|dz|^2$. Keeping the
notation introduced in Section \ref{sec:prelim} we have by
\eqref{xip} that the metric of the vertical projection is written
with respect to $z$ as
\begin{equation}\label{eq:proyecc}
g_F= -A^2dz^2 + (\landa-2|A|^2)|dz|^2 -\bar{A}^2d\bar{z}^2.
\end{equation}
Then the eigenvalues $\rho_1\geq \rho_2$ of the symmetric
bilinear form $g_F$ with respect to the flat metric $|dz|^2$ are
the solutions of the equation $\det [g_F-\rho\, \mbox{Id}]=0$,
which by {\bf (C.4)} yields $$\rho_1=\landa, \hspace{1cm} \rho_2=
\landa - 4|A|^2 = \landa u^2.$$ From this we obtain
\eqref{trescero} immediately.
\end{proof}

From this Lemma we can now prove that canonical examples are
entire minimal graphs. Indeed, if $X:\Sigma\to\Nil$ is a
canonical example with angle function $u$ and induced metric $g$
on $\Sigma$, the metric $u^2\, g$ is complete, and thus Lemma
\ref{lem:autov} gives the completeness of $g_F$. As a
consequence, $F:(\Sigma,g_F) \flecha \R^2$ is a local isometry
with $(\Sigma, g_F)$ complete. In these conditions, $F$ must
necessarily be a (surjective) covering map. Hence, the vertical
projection $F:\Sigma\to\R^2$ is a global diffeomorphism and $X$
is an entire graph.

This fact together with the comments in Section \ref{sec:prelim}
regarding the canonical examples in $\Nil$ proves the existence
part of Theorem \ref{th:main}.

Once here, in order to end up the proof of Theorem \ref{th:main},
it remains only to check that any entire minimal graph in $\Nil$
is one of the canonical examples. For that, let us first provide
a geometric interpretation of the differential system
\eqref{eq:A}.

From now on, $\Sigma$ will denote an open simply connected
Riemann surface.

Let $f:\Sigma\flecha \L^3$ denote a spacelike surface with
future-pointing Gauss map, with induced metric $\tau_0 |dz|^2$
and Hopf differential $Q dz^2$. Then:
 \begin{enumerate}
 \item
If $H=1/2$, its Gauss map $G:\Sigma\flecha \H^2$ is harmonic and
admits the Weierstrass data $\{Q,\tau_0\}$. In addition,
$Q,\tau_0$ verify the Gauss-Codazzi equations in $\L^3$ for
$H=1/2$, i.e.
 \begin{equation}\label{gcl3}
 (\log \tau_0)_{z\bar{z}} = \frac{\tau_0}{8}- \frac{2|Q|^2}{\tau_0}, \hspace{1.2cm} Q_{\bar{z}} =0.
 \end{equation}
 \item
Conversely, if a harmonic map $G:\Sigma\flecha \H^2$ admits
Weierstrass data $\{Q,\tau_0\}$, then by \cite[Lemma 4]{FeMi1}
the pair $Q,\tau_0$ satisfy \eqref{gcl3}. Therefore there exists
some (unique up to positive isometries) spacelike $H=1/2$ surface
$f:\Sigma\flecha \L^3$ with metric $\tau_0 |dz|^2$ and Hopf
differential $Q dz^2$.
 \end{enumerate}
Bearing these facts in mind, we have

\begin{lem}\label{lem:solution}
Let $\{Q,\tau_0\}$ be Weierstrass data of a harmonic map
$G:\Sigma\flecha \H^2$, and let $f:\Sigma\flecha \L^3$ denote an
arbitrary  spacelike $H=1/2$ surface in $\L^3$ with Hopf
differential $-Q dz^2$ and metric $\tau_0 |dz|^2$ (which is unique
up to positive isometries of $\L^3$).

Then a function $h:\Sigma\flecha \R$ is a solution to
\eqref{eq:A} if and only if $h=-\esiz f,a\esde$, where $a\in
\L^3$ verifies $\esiz a,a\esde=-1$ and belongs to the connected
component of the two-sheeted hyperboloid $\H^2\cup \H_-^2\subset
\L^3$ where the Gauss map of $f$ lies.
\end{lem}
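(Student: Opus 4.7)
The plan is to verify both implications by direct computation, reducing \eqref{eq:A} to the Gauss--Weingarten equations of $f$ in a conformal parameter. Denote by $\nu$ the unit normal of $f$ (with $\esiz\nu,\nu\esde=-1$), which by hypothesis lies in the same component of $\H^2\cup\H^2_-$ as the given vector $a$. The condition $H=1/2$ reads $f_{z\bar z}=(\tau_0/4)\,\nu$, and the Gauss equation together with the hypothesis that the Hopf differential of $f$ is $-Q\,dz^2$ gives
$$f_{zz}=(\log\tau_0)_z\,f_z-Q\,\nu.$$

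For the direction ``$\Leftarrow$'', assume $h=-\esiz f,a\esde$ and expand the constant vector $a$ in the Lorentz frame $\{f_z,f_{\bar z},\nu\}$ as $a=\alpha\,f_z+\bar\alpha\,f_{\bar z}+\gamma\,\nu$, where $\alpha:\Sigma\to\C$ and $\gamma:\Sigma\to\R$. Using $\esiz f_z,f_z\esde=0$, $\esiz f_z,f_{\bar z}\esde=\tau_0/2$, $\esiz f_z,\nu\esde=0$ and $\esiz\nu,\nu\esde=-1$, the constraint $\esiz a,a\esde=-1$ together with $a$ lying in the sheet of $\nu$ (so $\gamma\geq 1$) becomes $\gamma^{2}-\tau_0|\alpha|^{2}=1$. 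Then $h_z=-\esiz f_z,a\esde=-(\tau_0/2)\bar\alpha$ and $\esiz\nu,a\esde=-\gamma$, so $|h_z|^{2}=(\tau_0/4)(\gamma^{2}-1)$ and hence $\gamma=\sqrt{(\tau_0+4|h_z|^{2})/\tau_0}$. Differentiating $h=-\esiz f,a\esde$ twice and substituting the formulas for $f_{zz}$ and $f_{z\bar z}$ produces exactly the two equations of \eqref{eq:A}.

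For the direction ``$\Rightarrow$'', let $h$ solve \eqref{eq:A} and fix $z_0\in\Sigma$. Reversing the previous computation, set $\bar\alpha_0:=-2h_z(z_0)/\tau_0(z_0)$ and $\gamma_0:=\sqrt{1+4|h_z(z_0)|^{2}/\tau_0(z_0)}\geq 1$, and define the constant vector $a:=\alpha_0\,f_z(z_0)+\bar\alpha_0\,f_{\bar z}(z_0)+\gamma_0\,\nu(z_0)\in\L^3$. By construction $\esiz a,a\esde=-1$, $a$ lies in the sheet of $\nu$, and $-\esiz f_z(z_0),a\esde=h_z(z_0)$. The $\Leftarrow$ direction then shows that $\tilde h:=-\esiz f,a\esde$ is a solution of \eqref{eq:A} with $\tilde h_z(z_0)=h_z(z_0)$. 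The uniqueness statement in Theorem \ref{th:american} now forces $h-\tilde h$ to be an additive constant, which is absorbed by translating $f$ along any direction not Lorentz-orthogonal to $a$ --- a positive isometry of $\L^3$ under which $f$ is defined only up to equivalence.

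The main obstacle, beyond careful bookkeeping of the Lorentz-frame decomposition and the sign of the Hopf differential, is the dimensional accounting in the converse direction: solutions of \eqref{eq:A} form a three-parameter family (the complex value $h_z(z_0)$ plus the additive constant) while the admissible set of $a$'s is the two-parameter hyperboloid; this mismatch is precisely resolved by the translation freedom of $f$, which supplies the missing real parameter through $h\mapsto h-\esiz b,a\esde$ when $f\mapsto f+b$.
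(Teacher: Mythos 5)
Your proof is correct and follows essentially the same route as the paper: the forward implication is verified through the structure equations of $f$ and the decomposition of the constant vector $a$ in the frame $\{f_z,f_{\bar z},\nu\}$ (yielding the same identity $\esiz a,\nu\esde^2=1+4|h_z|^2/\tau_0$ with the sign fixed by the sheet condition), and the converse is reduced to matching the initial condition $h_z(z_0)$ and invoking the uniqueness clause of Theorem \ref{th:american}. Your explicit construction of $a$ at $z_0$ (in place of the paper's argument that the solution set of the linear system $-\esiz f_z(z_0),a\esde=\vartheta_0$ is an affine timelike line meeting the relevant sheet) and your explicit accounting of the additive constant via the translation freedom of $f$ are only minor, and in the latter case slightly more careful, variants of the paper's argument.
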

\begin{proof}
Let us first of all prove that $h:=-\esiz f, a\esde$ as above
solves \eqref{eq:A}. For that, we first rewrite \eqref{eq:A} as
 \begin{equation}\label{eq:B}
\left\{ \begin{array}{lll}
 h_{zz} & = & (\log\tau_0)_z\,  h_z - \displaystyle \frac{4Q}{\tau_0} h_{z\bar{z}}, \\
\\
h_{z\bar z} &=&
\displaystyle\frac{1}{4}\sqrt{\tau_0(\tau_0+4|h_z|^2)}.
\end{array}\right.
 \end{equation}
On the other hand, by the structure equations for $f$ in $\L^3$
we have
 \begin{equation}\label{estl3}
 f_{zz} = (\log \tau_0)_z f_z - Q G_f, \hspace{1cm} f_{z\bar{z}} = \frac{\tau_0}{4} G_f,
 \end{equation}
where $G_f :\Sigma\flecha \H^2\cup \H_-^2$ is the Gauss map of
$f$. So, it is clear by \eqref{estl3} that $h=-\esiz f,a\esde$
verifies the first equation in \eqref{eq:B} for every $a\in
\L^3$. To see that $h$ also verifies the second one we first
observe that, by \eqref{estl3},
 \begin{equation}\label{isa2}
 h_{z\bar{z}} = -\frac{\tau_0}{4} \esiz a, G_f\esde.
 \end{equation}
Besides, $$a= \frac{2\esiz a,f_{\bar{z}}\esde}{\tau_0} \, f_z +
\frac{2\esiz a,f_{z}\esde}{\tau_0} \, f_{\bar{z}} - \esiz
a,G_f\esde G_f,$$ so from $\esiz a,a\esde =-1$ we obtain
 \begin{equation}\label{isa3}
 \esiz a,G_f\esde^2 = 1+ \frac{4 |\esiz a,f_z\esde |^2}{\tau_0}.
 \end{equation}
Putting together \eqref{isa2}, \eqref{isa3} and taking into
account that $\esiz a,G_f\esde <0$ by hypothesis, we conclude
that $$h_{z\bar{z}}= \frac{\tau_0}{4} \sqrt{1+ \frac{4 |\esiz
a,f_z\esde |^2}{\tau_0}} =
\displaystyle\frac{1}{4}\sqrt{\tau_0(\tau_0+4|h_z|^2)},$$ i.e.
$h$ satisfies also the second equation of \eqref{eq:B}, and
therefore \eqref{eq:A} as claimed.

Finally, in order to see that all the solutions of \eqref{eq:A}
are of this form, we will see that for every $\vart_0\in \C$
there is some $a\in \H^2\cup \H_-^2$ in the connected component
of $G(\Sigma)$ such that
 \begin{equation}\label{conind}
 -\esiz f_z (z_0),a\esde = \vart_0
 \end{equation}
for a fixed point $z_0\in \Sigma$. Once this is done, the result
will follow by the uniqueness (up to additive constants) of
solutions to \eqref{eq:A} with the initial condition $h_z(z_0)=
\vart_0$.

To prove this claim, observe first that \eqref{conind} is a
linear system of two equations for $a=(a_1,a_2,a_3)$. So, as $df$
is spacelike and regular, the solution to the system above is an
affine timelike line in $\L^3$. This line meets both $\H^2$ and
$\H_-^2$, what ensures the existence of some $a$ with $\esiz
a,a\esde =-1$ in the connected component of $G(\Sigma)$ that
fulfills the initial condition \eqref{conind}.

\end{proof}

The next Lemma is the key ingredient to prove this classification
result. We also think that this result has importance by itself
beyond the use that we give it here. In particular, it may allow
to construct a Sym-Bobenko type formula for $H=1/2$ surfaces in
$\H^2\times \R$ and for minimal surfaces in $\Nil$.

\begin{lem}\label{lem:sister}
Let $\psi =(N,h):\Sigma\to\H^2\times\R$ be a $H=1/2$ surface with
regular vertical projection, and let $-Q dz^2$ denote its
Abresch-Rosenberg differential. Let $X=(F,t):\Sigma\to\Nil$ be
any of its sister minimal surfaces in $\Nil$.

Then there is some $\varepsilon = \pm 1$ such that
$f:=(F,\varepsilon h):\Sigma\to\L^3$ is the unique (up to
positive isometries) spacelike surface with $H=1/2$ in $\L^3$
with induced metric $u^2 g$ and Hopf differential $-Q dz^2$. Here
$g,u$ are, respectively, the metric and the angle function of $X$
(and of $\psi$).
\end{lem}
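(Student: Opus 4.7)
The plan is to identify $f=(F,\varepsilon h)$ with the spacelike $H=1/2$ surface $\tilde f:\Sigma\to\L^3$ produced by Lemma \ref{lem:solution} applied to the hyperbolic Gauss map of $\psi$. The argument has two stages: verify the induced Lorentz metric of $f$, and then match the spatial projections of $f$ and $\tilde f$.

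First I would use that the sister correspondence \eqref{eq:fund1}--\eqref{eq:fund2} forces $A_X=iA_\psi=ih_z$, so that $A_X^2=-h_z^2$ and $|A_X|^2=|h_z|^2$. Combining this with the formula \eqref{eq:proyecc} for $g_F$ gives
$$\esiz f_z,f_z\esde_{\L^3}=\esiz F_z,F_z\esde_{\R^2}-h_z^2=-A_X^2-h_z^2=0,$$
$$\esiz f_z,f_{\bar z}\esde_{\L^3}=\esiz F_z,F_{\bar z}\esde_{\R^2}-|h_z|^2=\tfrac{1}{2}(\landa-4|h_z|^2)=\tfrac{\landa u^2}{2},$$
where the last equality uses $\mathbf{(C.4)}$ with $A=A_\psi=h_z$. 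Hence $f$ is conformal and spacelike in $\L^3$ with induced metric $\tau_0|dz|^2=u^2 g$, setting $\tau_0:=\landa u^2$.

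For the second stage, Theorem \ref{th:american} tells us that the hyperbolic Gauss map $G$ of $\psi$ is harmonic with Weierstrass data $\{Q,\tau_0\}$ and that $h$ solves \eqref{eq:A}. Lemma \ref{lem:solution} then provides a spacelike $H=1/2$ surface $\tilde f:\Sigma\to\L^3$ with Hopf differential $-Qdz^2$ and metric $\tau_0|dz|^2$, together with a unit timelike $a\in\L^3$, such that $h=-\esiz \tilde f,a\esde$. Post-composing $\tilde f$ with a positive isometry of $\L^3$ I may normalize $a=(0,0,\mp 1)$, so that $\tilde f=(\tilde f_1,\tilde f_2,\varepsilon h)$ with $\varepsilon=\pm 1$. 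To identify $(\tilde f_1,\tilde f_2)$ with $F$, I pull back the Euclidean metric of $\R^2$ by $(\tilde f_1,\tilde f_2)$: it equals $\tau_0|dz|^2+dh^2$, which using $\tau_0+4|h_z|^2=\landa$ becomes
$$h_z^2\,dz^2+(\landa-2|h_z|^2)|dz|^2+h_{\bar z}^2\,d\bar z^2,$$
exactly the expression for $g_F$ that \eqref{eq:proyecc} gives with $A_X=ih_z$. Since both $F$ and $(\tilde f_1,\tilde f_2)$ are isometric immersions of $\Sigma$ into flat $\R^2$ with the same induced metric, by the rigidity of such immersions they differ by a Euclidean rigid motion, which (possibly composed with a simultaneous reversal of the spatial and time orientations of $\L^3$ that flips $\varepsilon$) can be absorbed into a further positive isometry of $\L^3$. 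This yields $\tilde f=(F,\varepsilon h)=f$, and uniqueness up to positive isometries of $\L^3$ is the uniqueness part of Lemma \ref{lem:solution}.

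The delicate point is the orientation bookkeeping in the final step: extending the Euclidean motion on $\R^2$ to a positive isometry of $\L^3$ that preserves the normalization of $a$ (up to sign) may require simultaneously reversing the time orientation, which is precisely the source of the sign ambiguity $\varepsilon=\pm 1$ in the statement.
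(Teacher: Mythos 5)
Your proposal is correct and follows essentially the same route as the paper: both arguments invoke Lemma \ref{lem:solution} to write the height function as $h=-\esiz f_0,a\esde$ for the spacelike $H=1/2$ surface with data $\{-Q,\tau_0\}$, normalize $a$ to be vertical, show via \eqref{eq:proyecc}, {\bf (C.4)} and $A=ih_z$ that the horizontal projections of $f_0$ and of $X$ induce the same flat metric, and then absorb the resulting planar rigid motion into a positive isometry of $\L^3$, which is exactly where the sign $\varepsilon=\pm1$ arises. Your preliminary direct computation of $\esiz df,df\esde$ is a harmless extra verification not present in the paper, but the substance of the two proofs is identical.
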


\begin{proof}
Let $z$ denote a conformal parameter on $\Sigma$, and write
$g=\landa |dz|^2$ for the common induced metric of both $\psi$
and $X$. Let us point out that $Q dz^2$ is the Hopf differential
of the (harmonic) hyperbolic Gauss map $G:\Sigma\flecha \H^2$ of
$\psi$, and that $G$ admits the Weierstrass data $\{Q,\tau_0 :=
\landa u^2\}$. Thus $\{-Q,\tau_0\}$ verify the Gauss-Codazzi equations \eqref{gcl3}, and there exists a unique (up to positive isometries of $\L^3$) spacelike $H=1/2$
immersion $f_0:\Sigma\flecha \L^3$ with induced metric $\tau_0
|dz|^2$ and Hopf differential $-Q dz^2$.

Let us denote $f_0=(P,r):\Sigma\flecha \L^3$. Then by Lemma
\ref{lem:solution} any solution to \eqref{eq:A} is of the form
$-\esiz f_0,a \esde$ for some $a\in \H^2\cup \H_-^2$. In
particular, the height function $h$ of $\psi =(N,h)$ is of this
form, by Theorem \ref{th:american}. Thus, by composing with an
adequate positive isometry of $\L^3$ if necessary, we may assume
that $h=r$.

With this, the metric of the projection $P$ is $$\esiz dP,dP\esde
= h_z^2 dz^2 + \left(\tau_0 + 2|h_z|^2\right) |dz|^2 +
h_{\bar{z}}^2 d\bar{z}^2.$$ On the other hand, let $X=(F,t)$ be
an arbitrary sister minimal surface in $\Nil$ of $\psi =(N,h)$. Then, by \eqref{eq:fund1} and \eqref{eq:fund2} we have that
$A=ih_z$, where $A$ belongs to the fundamental data of $X$. Now,
using this together with  \eqref{eq:proyecc} and formula
{\bf (C.4)} we infer that $$\def\arraystretch{1.8} \begin{array}{lll}
\esiz dF,dF\esde &= &-A^2 dz^2 + (\landa -2 |A|^2) |dz|^2 - \bar{A}^2 d\bar{z}^2 \\
& = & h_z^2 dz^2 + (\landa u^2 +2 |h_z|^2) |dz|^2 +
h_{\bar{z}}^2  d\bar{z}^2 \\ & = & h_z^2 dz^2 + \left(\tau_0 +
2|h_z|^2\right) |dz|^2 + h_{\bar{z}}^2 d\bar{z}^2,
\end{array}$$ i.e. $\esiz dF,dF\esde = \esiz dP,dP\esde$. Then $F=\Phi \circ P$ for some isometry $\Phi:\R^2\flecha \R^2$. Now, this isometry can be extended to a positive rigid motion $\Psi$ of $\L^3$, which may possibly reverse time-orientation. So, we can conclude the existence of some $\ep = \pm 1$ such that $\Psi\circ f_0=(F,\ep h)=:f$. This concludes the proof of Lemma \ref{lem:sister}.
\end{proof}

Once here we can finish the proof of Theorem \ref{th:main}. For
that, let us consider $X=(F,t):\Sigma\to\Nil$ to be an entire
minimal graph with Abresch-Rosenberg differential $Q dz^2$.
First, take its sister $H=1/2$ immersion
$\psi=(N,h):\Sigma\flecha \H^2\times \R$, which has
Abresch-Rosenberg differential $-Q dz^2$, induced metric $\landa
|dz|^2$, angle function $u$ (we shall assume that $u>0$ without
loss of generality), and is defined up to isometries of
$\H^2\times \R$ preserving the orientations of both base and
fibers. Let $G:\Sigma\flecha \H^2$ denote the (harmonic)
hyperbolic Gauss map of one of such sister immersions $\psi$.
Then $X:\Sigma\flecha \Nil$ will be a canonical example if $G$
differs at most by a rigid motion of $\L^3$ from the Gauss map
$\nu:\Sigma\flecha \H^2\cup \H_-^2\subset \L^3$ of some complete
spacelike surface $f_0:\Sigma\flecha \L^3$ with $H=1/2$. This is
just a consequence of the proper construction of the canonical
examples.

But now, by Theorem \ref{th:american}, the hyperbolic Gauss map
$G$ admits the Weierstrass data $\{Q, \landa u^2\}$. This means
that there is a unique (up to positive rigid motions) spacelike
surface $f_0:\Sigma\flecha \L^3$ with $H=1/2$, induced metric
$\landa u^2 |dz|^2$ and Hopf differential $Q dz^2$. By composing
if necessary with a positive isometry of $\L^3$ that reverses
time-orientation we can assume that the Gauss map of $f_0$ is
future-pointing. Moreover, it has the Weierstrass data $\{Q,
\landa u^2\}$. Now by \cite[Lemma 6]{FeMi1} this Gauss map
differs from $G$ at most by a rigid motion.

So, we only need to check that $f_0$ is complete, i.e that
$\landa u^2 |dz|^2$ is complete for $X=(F,t):\Sigma\flecha \Nil$.
By Lemma \ref{lem:sister} we know the existence of some spacelike
$H=1/2$ surface $f=(P,r):\Sigma\flecha \L^3$ such that $P=F$.
Thus, $f$ is an entire spacelike graph in $\L^3$. Now, a result
in \cite{ChYa} establishes that all spacelike entire CMC graphs
in $\L^3$ have complete induced metric (here the CMC assumption
is essential). In particular $f$ is complete. But again by Lemma
\ref{lem:sister}, the induced metric of $f$ is $\landa u^2
|dz|^2$. Thus, this metric is complete and $X$ must be a
canonical example.

We have then proved:

\begin{teo}\label{th:can}
A minimal surface in $\Nil$ is an entire vertical graph if and
only if it is a canonical example.
\end{teo}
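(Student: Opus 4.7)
The theorem has two directions; the ``if'' direction follows easily from the material already assembled, so the real work is in the ``only if'' direction.

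For the direction that canonical examples are entire graphs, my plan is to combine Lemma \ref{lem:autov} with the last bulleted property of canonical examples listed in Section \ref{sec:prelim}. Concretely, if $X:\Sigma\to\Nil$ is a canonical example with induced metric $g$ and angle function $u$, then $u^2 g$ is already known to be complete. Lemma \ref{lem:autov} sandwiches the projection metric $g_F$ between $u^2 g$ and $g$, so $g_F$ is complete as well. Hence $F:(\Sigma,g_F)\to\R^2$ is a local isometry with a complete domain, which forces it to be a (surjective) covering map of $\R^2$; since $\R^2$ is simply connected and $\Sigma$ is connected, $F$ is in fact a global diffeomorphism, so $X$ is an entire graph.

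For the converse, let $X=(F,t):\Sigma\to\Nil$ be an entire minimal graph with Abresch-Rosenberg differential $Q\,dz^2$, induced metric $\landa|dz|^2$ and angle function $u>0$. The plan is to follow the ``canonical example'' construction backwards and verify the one nontrivial property, namely completeness of the associated spacelike surface in $\L^3$. First I would pass to the sister $H=1/2$ immersion $\psi=(N,h):\Sigma\to\H^2\times\R$ and take its harmonic hyperbolic Gauss map $G:\Sigma\to\H^2$; by Theorem \ref{th:american}, $G$ admits the Weierstrass data $\{Q,\landa u^2\}$, and therefore by the Gauss-Codazzi equations \eqref{gcl3} there is a (unique up to positive isometries) spacelike $H=1/2$ surface $f_0:\Sigma\to\L^3$ with Hopf differential $Q\,dz^2$ and induced metric $\landa u^2|dz|^2$. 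Choosing a time orientation so that the Gauss map of $f_0$ is future-pointing and invoking \cite[Lemma 6]{FeMi1}, this Gauss map agrees with $G$ up to a rigid motion of $\L^3$. Thus, by the very definition of canonical examples, $X$ will be a canonical example provided $f_0$ is complete.

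The key step, and the main obstacle, is therefore the completeness of $f_0$. Here I would invoke Lemma \ref{lem:sister} applied to $X$ and its sister $\psi$: it produces a spacelike $H=1/2$ surface $f=(P,r):\Sigma\to\L^3$ with $P=F$, Hopf differential $-Q\,dz^2$ and induced metric $\landa u^2|dz|^2$ (the latter coincides with the induced metric of $f_0$, so $f$ and $f_0$ differ at most by a positive isometry of $\L^3$). Since $X$ is an entire graph, $F=P$ is a global diffeomorphism onto $\R^2$, so $f$ is an entire spacelike graph over $\R^2$. By the Cheng-Yau theorem in \cite{ChYa}, every entire spacelike CMC graph in $\L^3$ has complete induced metric, so $f$ is complete. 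Hence $\landa u^2 |dz|^2$ is complete, $f_0$ is complete, and $X$ is a canonical example, finishing the proof.
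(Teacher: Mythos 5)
Your argument is correct and follows essentially the same route as the paper: for the direct implication, completeness of $u^2 g$ together with Lemma \ref{lem:autov} to get completeness of $g_F$ and hence that $F$ is a covering map of $\R^2$; for the converse, the sister surface, the hyperbolic Gauss map with Weierstrass data $\{Q,\landa u^2\}$, Lemma \ref{lem:sister} and the Cheng--Yau theorem to conclude that $\landa u^2|dz|^2$ is complete. One minor caveat: $f$ and $f_0$ have opposite Hopf differentials, so they need not be congruent (they are isometric members of the same associated family but in general not related by a rigid motion of $\L^3$); fortunately your argument only uses that they share the induced metric $\landa u^2|dz|^2$, which is all that is needed to transfer completeness from $f$ to $f_0$.
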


From this result and the description of the canonical examples in
Section \ref{sec:prelim} we obtain Theorem \ref{th:main}.
Moreover, as a byproduct of our proof we get the following
characterization of the entire minimal graphs in $\Nil$.

\begin{cor}\label{corug} Let $X$ be a minimal surface in $\Nil$ with metric $g$
and angle function $u$. Then $X$ is an entire graph if and only
if $u^2\, g$ is a complete Riemannian metric.
\end{cor}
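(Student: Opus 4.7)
The corollary should follow essentially by combining Theorem \ref{th:can} with the argument already used to show canonical examples are entire graphs via Lemma \ref{lem:autov}. My plan is to treat the two implications separately, since each direction uses a different ingredient developed earlier in the paper.

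For the forward direction, suppose $X:\Sigma\flecha \Nil$ is an entire minimal graph. Then Theorem \ref{th:can} identifies $X$ with a canonical example. Among the enumerated properties of canonical examples at the end of Section \ref{sec:prelim}, the last one states precisely that $u^2\, g$ is a complete Riemannian metric. So the forward direction is immediate once Theorem \ref{th:can} is invoked.

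For the converse, suppose $u^2\, g$ is a complete Riemannian metric. The nondegeneracy of $u^2\, g$ forces $u\neq 0$ everywhere on $\Sigma$, so the vertical projection $F:\Sigma\flecha \R^2$ is a local diffeomorphism (equivalently, a local isometry when $\Sigma$ is endowed with $g_F$ and $\R^2$ with its canonical flat metric). By Lemma \ref{lem:autov} we have the pointwise inequality $u^2\, g \leq g_F$, so completeness of $u^2\, g$ implies completeness of $g_F$. Then $F:(\Sigma,g_F)\flecha \R^2$ is a local isometry from a complete Riemannian surface into the complete simply-connected Euclidean plane, hence a covering map; since $\R^2$ is simply connected, this covering must have a single sheet, and $F$ is a global diffeomorphism. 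Thus $X$ is an entire graph.

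This argument is essentially a re-packaging of the reasoning already carried out for canonical examples right after Lemma \ref{lem:autov}, together with the classification in Theorem \ref{th:can}, so no new obstacles appear. The only subtlety worth flagging is checking that $u^2\, g$ being a \emph{Riemannian} (i.e.\ positive definite) metric forces $u$ to be nowhere zero, which ensures $F$ is a local diffeomorphism; without this, the covering-space step cannot be started.
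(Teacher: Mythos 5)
Your proposal is correct and follows essentially the same route as the paper: the converse is exactly the paper's covering-map argument via Lemma \ref{lem:autov} (stated explicitly in the remark following the corollary), and the forward direction is the completeness of $u^2 g$ for canonical examples, which the paper establishes inside the proof of Theorem \ref{th:can} via Lemma \ref{lem:sister} and the Cheng--Yau completeness of entire CMC spacelike graphs in $\L^3$. Your observation that positive definiteness of $u^2 g$ forces $u\neq 0$, hence that $F$ is a local diffeomorphism, is the right point to flag and is handled correctly.
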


Observe that if $u^2 g$ is a complete metric for an arbitrary
surface in any homogeneous space $\Ek$ (not necessarily with
CMC), then the surface must be an entire graph by Lemma
\ref{lem:autov}. However, the converse is not true in general.
For instance, the entire graph $$x_3(x_1,x_2)= x_2 \,
e^{1/(1-x_1^2-x_2^2)}, \hspace{1cm} x_1^2 + x_2^2 <1,$$ in
$\H^2\times \R \equiv (\D,ds_P^2)\times\R$, has non-complete $u^2
g$ metric. Indeed, it is an elementary computation to check that
the divergent curve $\alpha(t):= (t,0,x_3(t,0))$, $0\leq t <1$,
lies on the graph and has finite $u^2 g$ length.

\section{Surfaces with $H=1/2$ in $\H^2\times \R$ and further
results}\label{sec:H2xR}

In view of the results of the present paper and Daniel's
Lawson-type correspondence, it is a natural problem to ask for
the classification of the entire vertical graphs with $H=1/2$ in
the product space $\H^2\times \R$ (see \cite{FeMi1,Sa}). The
following result, which constitutes an improvement of
\cite[Theorem 16]{FeMi1}, shows that the class of solutions to
such Bernstein-type problem in $\H^2\times \R$ is at least as
large as the class of entire minimal graphs in $\Nil$.

\begin{pro}\label{proH2}
The sister surface of an entire minimal graph in $\Nil$ is an
entire $H=1/2$ graph in $\H^2\times\R$.

Consequently, given a holomorphic quadratic differential $Q dz^2$
on $\Sigma \equiv \C$ or $\D$ which is not identically zero if $\Sigma\equiv \C$, there exists a
$2$-parametric family of (generically non-congruent) entire
$H=1/2$ graphs in $\H^2\times \R$ whose Abresch-Rosenberg
differential is $Qdz^2$.
\end{pro}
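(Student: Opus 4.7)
The plan is to derive Proposition \ref{proH2} as an almost immediate consequence of Corollary \ref{corug} and Lemma \ref{lem:autov}, applied this time to the sister of an entire minimal graph in $\Nil$. The key observation is that the sister correspondence between minimal surfaces in $\Nil$ and $H=1/2$ surfaces in $\H^2\times\R$ preserves both the induced metric $g$ and the angle function $u$, while Lemma \ref{lem:autov} is stated in the universal $\Ek$ setting and requires no mean curvature hypothesis.

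Concretely, I would let $X:\Sigma\to\Nil$ be an entire minimal graph with induced metric $g=\landa|dz|^2$ and angle function $u$, and consider any of its sister $H=1/2$ immersions $\psi=(N,h):\Sigma\to\H^2\times\R$. By Corollary \ref{corug}, the metric $u^2 g$ is complete on $\Sigma$, and by the explicit form \eqref{eq:fund1}--\eqref{eq:fund2} of the correspondence, the same $\landa$ and the same $u$ appear as fundamental data of $\psi$, so $u^2 g$ is also the quantity attached to $\psi$. Lemma \ref{lem:autov}, applied to $\psi$ in $\H^2\times\R$, then yields
$$u^2\,g(v,v)\ \leq\ g_N(v,v)\ \leq\ g(v,v),$$
where $g_N$ denotes the pullback under the vertical projection $N:\Sigma\to\H^2$. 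The left-hand inequality, combined with completeness of $u^2 g$, forces $g_N$ to be complete. Hence $N:(\Sigma,g_N)\to\H^2$ is a complete local isometry into a simply connected target, and by the standard covering-space argument it is a global diffeomorphism. Thus $\psi$ is an entire vertical graph in $\H^2\times\R$, proving the first assertion.

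The second assertion then follows formally. I would apply Theorem \ref{th:main} to $-Qdz^2$ (which satisfies the same hypotheses as $Qdz^2$) to obtain a $2$-parameter family of generically non-congruent entire minimal graphs in $\Nil$ with Abresch-Rosenberg differential $-Qdz^2$. Passing to their sister immersions produces, via the sign flip of the Abresch-Rosenberg differential under the correspondence, a $2$-parameter family of $H=1/2$ immersions in $\H^2\times\R$ with Abresch-Rosenberg differential $Qdz^2$; the first part of the proposition guarantees that each such sister is in fact an entire vertical graph. Non-congruence of the $\Nil$-side surfaces transfers to their $\H^2\times\R$ sisters since the sister correspondence intertwines the orientation-preserving isometry groups of the two ambient spaces (cf. Remark \ref{loqueisa}).

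I do not foresee any substantial obstacle: the entire argument is a transcription made possible by the fact that $\{\landa,u\}$ are preserved by the sister correspondence, so the $\Nil$-side completeness characterization of entire graphs (Corollary \ref{corug}) travels untouched to $\H^2\times\R$ via the universal inequality of Lemma \ref{lem:autov}. The only detail worth an explicit check is that Lemma \ref{lem:autov} was proved without any mean curvature assumption, which is exactly what allows us to invoke it for $\psi$ even though completeness of $u^2 g$ is only known through the $\Nil$-side argument.
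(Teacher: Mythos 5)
Your proposal is correct and follows essentially the same route as the paper, whose proof is the one-line observation that the result ``follows directly from Corollary \ref{corug}, Lemma \ref{lem:autov} and the fact that the metric $\landa u^2|dz|^2$ is invariant by the sister correspondence''; you have simply written out the details (completeness of $u^2g$ transfers to the sister, Lemma \ref{lem:autov} forces completeness of the projected metric, and the covering argument gives the global diffeomorphism). The deduction of the second assertion from Theorem \ref{th:main} via the sign flip of the Abresch--Rosenberg differential is likewise exactly what the paper intends.
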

\begin{proof}
It follows directly from Corollary \ref{corug}, Lemma
\ref{lem:autov} and the fact that the metric $\landa u^2 |dz|^2$
 is invariant by the sister correspondence.
\end{proof}

\begin{remark}
By Corollary \ref{corug}, the entire $H=1/2$ graphs in Proposition
\ref{proH2} are characterized by the fact that $u^2 g$ is a
complete metric. It is natural to ask if these are the only
entire $H=1/2$ graphs in $\H^2\times \R$. Unfortunately, we
cannot proceed in this setting as we did for minimal graphs in
$\Nil$, since we do not have a good control on the vertical
projection for $H=1/2$ surfaces in $\H^2\times \R$. Let us also
point out that the existence of an entire $H=1/2$ graph in
$\H^2\times \R$ for which  $u^2 g$ is not complete remains unknown.
\end{remark}

Another interesting application of the arguments used in the
proof of Theorem \ref{th:main} is the following.

\begin{cor}\label{crG}
Let $\psi:\Sigma\flecha \H^2\times \R$ denote a $H=1/2$ surface
with regular vertical projection such that its hyperbolic Gauss
map $G:\Sigma\flecha \H^2$ is a global diffeomorphism. Assume
additionally that its Abresch-Rosenberg differential vanishes
somewhere. Then $\psi$ is one of the entire graphs given in
Proposition \ref{proH2}.
\end{cor}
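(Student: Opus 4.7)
The plan is to reduce the corollary to the completeness criterion of Corollary~\ref{corug}, passed through the sister correspondence via Proposition~\ref{proH2}. Concretely, I will show that under the hypotheses the metric $\tau_0|dz|^2 := \landa u^2 |dz|^2$ on $\Sigma$ is complete; applying Corollary~\ref{corug} to the sister minimal surface of $\psi$ in $\Nil$, this sister will then be an entire graph in $\Nil$, and Proposition~\ref{proH2} will finish the proof.

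By Theorem~\ref{th:american}, the harmonic map $G$ admits the Weierstrass data $\{Q,\tau_0\}$, so $\esc{dG}{dG} = Q\,dz^2 + \bigl(\tau_0/4 + 4|Q|^2/\tau_0\bigr)|dz|^2 + \bar Q\,d\bar z^2$. A direct $2\times 2$ matrix computation (writing the $(2,0)+(0,2)$ and $(1,1)$ pieces of $\esc{dG}{dG}$ in real coordinates) gives that the eigenvalues of $G^*(ds_{\H^2}^2)$ relative to $|dz|^2$ are $(\tau_0\pm 4|Q|)^2/(4\tau_0)$. In particular the Jacobian of $G$ vanishes exactly on the set $\{\tau_0 = 4|Q|\}$.

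Since $G$ is a global diffeomorphism it is everywhere a local one, so $\tau_0 - 4|Q|\neq 0$ on $\Sigma$. By hypothesis there is some $z_0\in\Sigma$ with $Q(z_0)=0$, at which $\tau_0(z_0) - 4|Q(z_0)| = \tau_0(z_0) > 0$. Connectedness of $\Sigma$ and continuity then force $\tau_0 > 4|Q|$ everywhere. This inequality is equivalent to $(\tau_0+4|Q|)^2\leq 4\tau_0^2$, so the maximal eigenvalue of $G^*(ds_{\H^2}^2)$ is at most $\tau_0$, which means $G^*(ds_{\H^2}^2) \leq \tau_0|dz|^2$ as symmetric bilinear forms on $\Sigma$.

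Since $G:\Sigma\flecha\H^2$ is a diffeomorphism onto the complete Riemannian manifold $\H^2$, its pull-back $G^*(ds_{\H^2}^2)$ is complete on $\Sigma$; hence the larger metric $\tau_0|dz|^2 = \landa u^2|dz|^2$ is complete too. Applying Corollary~\ref{corug} to the sister minimal surface $X:\Sigma\flecha\Nil$ of $\psi$ (whose fundamental data satisfy $\landa u^2 = \tau_0$), we conclude that $X$ is an entire minimal graph in $\Nil$, and Proposition~\ref{proH2} then identifies $\psi$ with the associated entire $H=1/2$ graph of $\H^2\times\R$. The only slightly delicate step is the eigenvalue computation in paragraph two; the rest is a comparison of metrics. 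The hypothesis that $Q$ vanishes somewhere is used in an essential way only to pin down the sign of $\tau_0 - 4|Q|$, and I do not anticipate any other obstacle.
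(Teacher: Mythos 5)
Your argument is correct and follows essentially the same route as the paper: compute the eigenvalues of $\esc{dG}{dG}$ with respect to $|dz|^2$ from the Weierstrass data $\{Q,\tau_0\}$, use regularity of $G$ plus the vanishing of $Q$ somewhere and connectedness to get $\tau_0>4|Q|$ everywhere, deduce completeness of $\landa u^2|dz|^2$ from completeness of the pulled-back metric, and conclude via Corollary~\ref{corug} and Proposition~\ref{proH2}. The only differences are cosmetic (you make the passage through the sister surface in $\Nil$ explicit, and you have a harmless sign convention on $Q$ and a $\leq$ that should be $<$).
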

\begin{proof}
By Theorem \ref{th:american}, $G$ has Weierstrass data
$\{-Q,\landa u^2\}$, where $Q dz^2$ is the Abresch-Rosenberg
differential of $\psi$. Thus $$\esiz dG,dG\esde = -Q dz^2 +
\left(\frac{\landa u^2}{4} + \frac{4 |Q|^2}{\landa u^2}\right)
|dz|^2 - \bar{Q} d\bar{z}^2.$$ If we now compute the eigenvalues
$\rho_1\geq \rho_2$ of $\esiz dG,dG\esde$ with respect to the
flat metric $|dz|^2$, we obtain $$\rho_1 = \frac{\landa u^2}{4} (
1 + d_G)^2, \hspace{1cm} \rho_2 = \frac{\landa u^2}{4} ( 1 -
d_G)^2, \hspace{0.5cm} d_G:= \frac{4|Q|}{\landa u^2}.$$ Hence, as
$\esiz dG,dG\esde$ is complete by hypothesis, the metric $\rho_1
|dz|^2$ is complete. But since $\esiz dG,dG\esde$ is regular, we
infer that $4|Q|\neq \landa u^2$ at every point. So, as $Q$
vanishes somewhere we get $d_G \in [0,1)$. By this fact and the
completeness of $\rho_1 |dz|^2$ we conclude that $\landa u^2
|dz|^2$ is a complete metric, and hence by Corollary \ref{corug}
that $\psi$ is one of the entire graphs of Proposition
\ref{proH2}.
\end{proof}

\begin{remark}
If $G:\Sigma\flecha \H^2$ admits Weierstrass data $\{Q,\tau_0\}$
such that $Q$ never vanishes, then there is a second set of
Weierstrass data $\{Q, \tau^{\sharp}:= 16|Q|^2 / \tau_0\}$
available for $G$. These two sets of data give rise to
\emph{parallel-like} $H=1/2$ surfaces $\psi,\psi^{\sharp}$ in
$\H^2\times \R$ with the same hyperbolic Gauss map $G$ (see
\cite[Theorem 19]{FeMi1}). In these conditions, we can still say
that if $G$ is a global diffeomorphism, then one of $\psi,
\psi^{\sharp}$ is an entire graph of Proposition \ref{proH2}. This
follows by a simple modification of the last part of the above
proof.
\end{remark}

\begin{remark}
The existence of harmonic diffeomorphisms from $\C$ onto $\H^2$
has been recently proved by Collin and Rosenberg in \cite{CoRo}.
\end{remark}

It is interesting to observe that Corollary \ref{crG} has a
counterpart for minimal surfaces in $\Nil$. Indeed, in
\cite{Dan2} Daniel gave an extension to the case of minimal
surfaces in $\Nil$ of the harmonicity theory for the hyperbolic
Gauss map of $H=1/2$ surfaces in $\H^2\times \R$. In this $\Nil$
setting, the harmonic Gauss map is simply the usual Gauss map of
a surface with regular vertical projection (taking values on the
upper hemisphere of the Lie algebra of $\Nil$), composed with
stereographic projection. This gives a Gauss map $g:\Sigma\flecha
\D$ which is harmonic for minimal surfaces when we endow $\D$
with the Poincaré metric. See \cite{Dan2} for more details.

Taking these facts into consideration, one can see that the proof
of Corollary \ref{crG} also works in $\Nil$, and so we obtain:

\begin{cor}
Let $X:\Sigma\flecha \Nil$ denote a minimal surface with regular
vertical projection, and whose Abresch-Rosenberg differential
vanishes somewhere. Assume that its usual Gauss map is a global
diffeomorphism onto the upper hemisphere $\S_+^2$. Then $X$ is
canonical example.
\end{cor}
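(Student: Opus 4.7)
The plan is to replay, in the $\Nil$ setting, the argument used for Corollary \ref{crG}, with the (stereographically projected) usual Gauss map $g:\Sigma\flecha\D$ of $X$ in place of the hyperbolic Gauss map $G$ of Corollary \ref{crG}. By Daniel's theorem \cite{Dan2}, $g$ is harmonic into $(\D,ds_P^2)$, and the assumption that $g$ is a global diffeomorphism onto $\S_+^2$ (equivalently, onto $(\D,ds_P^2)$) will play the same role that the analogous hypothesis on $G$ played in Corollary \ref{crG}; the hypothesis that the Abresch-Rosenberg differential $Q dz^2$ vanishes somewhere will be used identically.

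First, I would record the Weierstrass-type expression for $\esiz dg,dg\esde$. Combining Daniel's $\Nil$-version of the theory in \cite{Dan2} with the sister correspondence between minimal surfaces in $\Nil$ and $H=1/2$ surfaces in $\H^2\times\R$ (which preserves $\landa$, $u$, and $|Q|$) and with Theorem \ref{th:american}, one obtains, up to a harmless phase factor, that
$$\esiz dg,dg\esde \;=\; Q\, dz^2 \;+\; \left(\frac{\landa u^2}{4} + \frac{4|Q|^2}{\landa u^2}\right) |dz|^2 \;+\; \bar{Q}\, d\bar{z}^2,$$
whose eigenvalues with respect to the flat metric $|dz|^2$ factor as
$$\rho_1 = \frac{\landa u^2}{4}(1+d_g)^2, \qquad \rho_2 = \frac{\landa u^2}{4}(1-d_g)^2, \qquad d_g := \frac{4|Q|}{\landa u^2}.$$
I expect this first step to be the main technical point: one must verify from \cite{Dan2} that $\landa u^2$ is exactly the conformal-factor companion of $Q$ in the Weierstrass data of $g$, rather than some rescaling of it; everything downstream is universal in the eigenvalue factorization above.

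Once this is in place, the conclusion follows exactly as in the proof of Corollary \ref{crG}. Since $g$ is a global diffeomorphism onto the upper hemisphere endowed with its natural complete metric, the pulled-back metric $\esiz dg,dg\esde$ is complete on $\Sigma$, hence $\rho_1 |dz|^2$ is complete. Being in particular a local diffeomorphism, $g$ satisfies $\rho_2>0$ everywhere, so $d_g\neq 1$ pointwise; and because $Q$ vanishes somewhere, at any such zero one has $d_g=0$, so by continuity $d_g\in[0,1)$ throughout $\Sigma$. Therefore $(1+d_g)^2\leq 4$, hence $\landa u^2\geq\rho_1$ pointwise, and completeness of $\rho_1|dz|^2$ forces completeness of $\landa u^2|dz|^2$. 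Corollary \ref{corug} then gives that $X$ is an entire vertical minimal graph in $\Nil$, and Theorem \ref{th:can} identifies $X$ as a canonical example, which is the desired conclusion.
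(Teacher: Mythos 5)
Your proposal is correct and follows essentially the same route as the paper, which itself proves this corollary simply by observing that the argument of Corollary \ref{crG} transfers verbatim to $\Nil$ once one knows from \cite{Dan2} that the (stereographically projected) Gauss map $g$ is harmonic with Weierstrass data involving $Q$ and $\landa u^2$. You have correctly identified that the only point requiring verification is that $\landa u^2$ is indeed the conformal companion of $Q$ in those data; the eigenvalue factorization, the use of $d_g\in[0,1)$, and the conclusion via Corollary \ref{corug} and Theorem \ref{th:can} all match the paper's intended argument.
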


We end up with two closing remarks regarding our results. The
study of the Bernstein problem in $\Nil$ presented here makes
constant use of the theory of $H=1/2$ surfaces in $\H^2\times
\R$, for instance, for the construction of the canonical
examples. It is nonetheless very likely that our arguments can be
modified in certain aspects to give an alternative proof of
Theorem \ref{th:main} without using $\H^2\times \R$ at all. This
possibility is given by the harmonicity of the Gauss map for
minimal surfaces in $\Nil$ \cite{Dan2}. We have followed here the
path across $\H^2\times \R$ to give a natural continuation of the
work \cite{FeMi1}, which is where the canonical examples were
first constructed, and also because we believe that this more
flexible perspective may give a clearer global picture of CMC
surface theory in homogeneous $3$-manifolds. Lemma
\ref{lem:sister} is an example of the benefits that this
perspective can have.

Let us finally point out that there are still relevant problems
regarding the global geometry of entire minimal graphs in $\Nil$.
For instance, the problem of describing the global behavior of
canonical examples in $\Nil$  is specially interesting.  Another
interesting inquiry line is to give concrete geometric conditions
that characterize the horizontal umbrellas or the saddle-type
minimal graphs among all canonical examples (let us point out
that these minimal surfaces are the only examples invariant by a
$1$-parameter isometry subgroup).

\subsubsection*{Added after revision}

Very recently, L. Hauswirth, H. Rosenberg and J. Spruck
\cite{hrs} have proved that any complete local graph with mean
curvature one half in $\H^2\times\R$ must be an entire graph.
After that work, the analogous result for minimal
surfaces in $\Nil$ has been proved by B. Daniel and L. Hauswirth
\cite{dh}.

As a consequence, it was obtained in \cite{dh} that any entire graph with $H=1/2$ in $\H^2\times\R$ must be the sister surface of a canonical example in $\Nil$. This result gives a classification of the entire $H=1/2$ graphs in $\H^2\times \R$, proving that any such entire graph is one of the examples given in Proposition \ref{proH2}.

\def\refname{References}


\begin{thebibliography}{9}

\bibitem[AbRo1]{AbRo1} U. Abresch, H. Rosenberg, A Hopf differential for constant mean curvature surfaces in $\S^2\times \R$ and $\H^2\times \R$, {\it Acta Math.} {\bf 193} (2004), 141--174.

\bibitem[AbRo2]{AbRo2} U. Abresch, H. Rosenberg, Generalized Hopf differentials, {\it Mat. Contemp.} {\bf 28} (2005), 1--28.

\bibitem[ADR]{ADR} L.J. Alías, M. Dajczer, H. Rosenberg, The Dirichlet problem for CMC surfaces in Heisenberg space,
{\it Calc. Var. Partial Diff. Equations}, {\bf 30} (2007),
513--522.

\bibitem[BSV]{BSV} V. Barone-Adesi, F. Serra-Cassano, D. Vittone, The Bernstein problem for intrinsic graphs in the Heisenberg group and calibrations,
  {\it Calc. Var. Partial Diff. Equations}, {\bf 30} (2007), 17--49.

\bibitem[Bry]{Bry} R.L. Bryant, Surfaces of mean curvature one in
hyperbolic space, {\it Astérisque}, {\bf 154-155} (1987), 321--347.

\bibitem[ChYa]{ChYa} S.Y. Cheng, S.T. Yau, Maximal spacelike hypersurfaces in the
Lorentz-Minkowski spaces, {\it Ann. of Math.} {\bf 104} (1976),
407--419.

\bibitem[CHMY]{CHMY} J.H. Cheng, J.F Hwang, A. Malchiodi, P. Yang, Minimal surfaces in pseudohermitian geometry, {\it Ann. Sc. Norm. Super. Pisa Cl. Sci.} {\bf 4} (2005), 129--177.

\bibitem[CoRo]{CoRo} P. Collin, H. Rosenberg, {Construction of harmonic diffeomorphisms and minimal graphs}, preprint, 2007.

\bibitem[Dan1]{Dan1} B. Daniel, Isometric immersions into $3$-dimensional homogeneous manifolds, {\it Comment. Math. Helv.} {\bf 82} (2007), 87-131.

\bibitem[Dan2]{Dan2} B. Daniel, The Gauss map of minimal surfaces in the Heisenberg group, preprint, 2006.

\bibitem[DaHa]{dh} B. Daniel, L. Hauswirth, {Half-space theorem,
embedded minimal annuli and minimal graphs in the Heisenberg
space,} preprint, 2007.

\bibitem[FeMi1]{FeMi1} I. Fernández, P. Mira, Harmonic maps and constant mean curvature surfaces in $\H^2\times \R$,
{\it Amer. J. Math.} {\bf 129} (2007), 1145--1181.

\bibitem[FeMi2]{FeMi2} I. Fernández, P. Mira, A characterization of constant mean curvature surfaces
in homogeneous $3$-manifolds, {\it Diff. Geom. Appl.}, {\bf 25} (2007), 281--289.

\bibitem[FMP]{FMP} C. Figueroa, F. Mercuri, R. Pedrosa, Invariant surfaces of the Heisenberg groups, {\it Ann. Mat. Pura Appl.} {\bf 177} (1999), 173--194.

\bibitem[GaMi]{GaMi} J.A. Gálvez, P. Mira, The Cauchy problem for the Liouville equation
and Bryant surfaces, {\it Adv. Math.}, {\bf 195} (2005), 456--490.

\bibitem[GaPa]{GaPa} N. Garofalo, S. Pauls, { The Bernstein problem in the Heisenberg
group}, preprint, 2003.

\bibitem[HRS]{hrs} L. Hauswirth, H. Rosenberg, J. Spruck { On complete mean curvature $1/2$ surfaces in $\H^2\times\R$},
preprint, 2007.

\bibitem[HsHs]{HsHs} W.Y. Hsiang, W.T. Hsiang, On the uniqueness of isoperimetric solutions and imbedded soap bubbles in noncompact symmetric spaces I, {\it Invent. Math.} {\bf 98} (1989), 39--58.

\bibitem[IKOS]{Iea} J. Inoguchi, T. Kumamoto, N. Ohsugi, Y. Suyama, Differential geometry of curves and surfaces in $3$-dimensional homogeneous spaces II, {\it Fukuoka Univ. Sci. Reports} {\bf 30} (2000), 17--47.

\bibitem[MMP]{MMP} F. Mercuri, S. Montaldo, P. Piu, A Weierstrass representation formula for minimal surfaces in $\H_3$ and $\H^2\times \R$, {\it Acta Math. Sinica}, {\bf 22} (2006), 1603--1612.

\bibitem[RiRo]{RiRo} M. Ritoré, C. Rosales, \emph{Area stationary surfaces in the Heisenberg group $\H^1$}, preprint, 2005.

\bibitem[Sa]{Sa} R. Sa Earp, Parabolic and hyperbolic screw
motion surfaces in $\H^2\times\R$, to appear in {\it J. Austr.
Math. Soc.}, 2007.


\bibitem[Wan]{Wan} T.Y. Wan, Constant mean curvature surface harmonic map and universal Teichmuller space, {\it J. Differential Geom.} {\bf 35} (1992), 643--657.

\bibitem[WaAu]{WaAu} T.Y. Wan, T.K. Au, Parabolic constant mean curvature spacelike surfaces, {\it Proc. Amer. Math. Soc.} {\bf 120} (1994), 559-564.

\end{thebibliography}
\end{document}